\newtheorem{theorem}{Theorem}[section]
\newtheorem{lemma}[theorem]{Lemma}
\theoremstyle{definition}
\newtheorem{definition}[theorem]{Definition}
\theoremstyle{remark}
\newtheorem{remark}[theorem]{Remark}
\numberwithin{equation}{section}
\newcommand\spa{\operatorname{\text{span}}}
\newcommand\AUT{\operatorname{\sf AUT}}
\newsavebox\myboxA
\newsavebox\myboxB
\newlength\mylenA
\newcommand*\xbar[2][0.75]{%
    \sbox{\myboxA}{$\m@th#2$}%
    \setbox\myboxB\null
    \ht\myboxB=\ht\myboxA%
    \dp\myboxB=\dp\myboxA%
    \wd\myboxB=#1\wd\myboxA
    \sbox\myboxB{$\m@th\overline{\copy\myboxB}$}
    \setlength\mylenA{\the\wd\myboxA}
    \addtolength\mylenA{-\the\wd\myboxB}%
    \ifdim\wd\myboxB<\wd\myboxA%
       \rlap{\hskip 0.5\mylenA\usebox\myboxB}{\usebox\myboxA}%
    \else
        \hskip -0.5\mylenA\rlap{\usebox\myboxA}{\hskip 0.5\mylenA\usebox\myboxB}%
    \fi}
\newcommand{\abs}[1]{\lvert#1\rvert}
\begin{document}

\title{A general bridge theorem for self-avoiding walks}

\author{Christian Lindorfer}
\address{\parbox{.8\linewidth}{Institut f\"ur Diskrete Mathematik,\\ 
Technische Universit\"at Graz,\\
Steyrergasse 30, A-8010 Graz, Austria\\ $\,$}}
\email{lindorfer@math.tugraz.at}

\subjclass[2010] {05C30,   
				  82B20}   
		  
\date{\today} 


\keywords{Self-avoiding walks, bridges, Grandparent graph}

\begin{abstract}
Let $X$ be an infinite, locally finite, connected, quasi-transitive graph without loops or multiple edges. A graph height function on $X$ is a map adapted to the graph structure, assigning to every vertex an integer, called height. Bridges are self-avoiding walks such that heights of interior vertices are bounded by the heights of the start- and end-vertex. The number of self-avoiding walks and the number of bridges of length $n$ starting at a vertex $o$ of $X$ grow exponentially in $n$ and the bases of these growth rates are called connective constant and bridge constant, respectively. We show that for any graph height function $h$ the connective constant of the graph is equal to the maximum of the two bridge constants given by increasing and decreasing bridges with respect to $h$. As a concrete example, we apply this result to calculate the connective constant of the Grandparent graph.
\end{abstract}

\maketitle
\vspace*{-0.9cm}
\section{Introduction and results}

Let $(X,\sim)$ be a simple, locally finite, connected, infinite graph consisting of a countable set of vertices $X$ and a symmetric neighbourhood relation $\sim$. We consider a walk on the graph as a sequence of vertices, where consecutive vertices in the walk must be adjacent in the graph and call it \emph{self-avoiding} (or a SAW), if no vertex of the graph is contained twice in this sequence. The length of a walk will be the length of the sequence reduced by one, so the number of ``steps" in the walk.\par

We shall consider only graphs which are quasi-transitive, i.e. the automorphism group of $X$ acts with finitely many orbits on $X$. Fix some vertex $o$ of $X$ and denote by $c_{n,o}$ the number of SAWs of length $n$ starting at $o$. The existence of
\begin{equation*}
\mu(X)=\lim_{n \rightarrow \infty} c_{n,o}^{1/n}
\end{equation*}
and its independence of the choice of $o$ were shown in \cite{MR0091568} by {\sc Hammersley} and the number $\mu(X)$ is called the \emph{connective constant} of the graph $X$.\par 
The study of self-avoiding walks, primarily in lattice graphs, was initiated by the chemist and Nobel laureate {\sc P. J. Flory} (see his book \cite{CR1000000}) as a tool for studying long-chain polymer growth. Physicists are especially interested in SAWs in the integer lattices $\mathbb{Z}^d$ for $d \geq 2$ and a lot of work has been dedicated to the subject. Good references are the monograph by {\sc Madras and Slade} \cite{MR2986656} and the lecture notes by {\sc Bauerschmidt et al.} \cite{MR3025395}. Although some good bounds are known for $\mu(\mathbb{Z}^d)$, the precise values for $d \geq 2$ are still unknown. \par

A highlight has been the very well received paper \cite{MR2912714} by {\sc Duminil-Copin and Smirnov}, who proved that the connective constant of the honeycomb lattice equals $\sqrt{2 + \sqrt{2}}$. In the final part of the proof they used the Hammersley-Welsh method, a decomposition of walks into bridges, which was introduced in \cite{MR0139535} and serves as the basis of our bridge theorem. \par 

A \emph{graph height function} $h$ on a quasi-transitive graph $X$ is a map assigning to every vertex $v$ of $X$ an integer $h(v)$, called the height of $v$. Graph height functions have to be adapted to the graph structure according to Definition \ref{def:ghf}. They are used to define bridges, which are self-avoiding walks $\pi=(v_0, \dots, v_n)$ satisfying
\begin{equation*}
h(v_0)<h(v_i) \leq h(v_n), \quad 0<i<n.
\end{equation*}
Similarly to the connective constant of a graph, one can now define the \emph{bridge constant} $\beta(X,h)$, the base of the exponential growth of the number of bridges $b_{n,o}$ of length $n$ starting at $o$, which is again independent of the choice of $o$.
\begin{equation*}
\beta(X,h)=\lim_{n \rightarrow \infty} b_{n,o}^{1/n}.
\end{equation*}\par

The definitions of graph height functions, bridges and the bridge constant were first given by {\sc Hammersley and Welsh} in \cite{MR0139535} for the graph $\mathbb{Z}^d$, where $h$ mapped any vertex to its first coordinate. In this paper the authors also invented the above mentioned method of decomposing self-avoiding walks into bridges and used it to show that the connective constant is equal to the bridge constant.
{\sc Grimmett and Li} then generalized graph height functions, bridges and the bridge constant to quasi-transitive graphs in \cite{MR3862645} and proved a version of the bridge theorem, stating that the bridge constant with respect to a \emph{unimodular} graph height function is equal to the connective constant of such a graph.\par
The goal of this paper is to generalize the bridge theorem, in the sense that we do not need graph height functions to be unimodular. Our main result Theorem \ref{thm:extbridgethm} states that for any graph height function $h$ on $X$ the connective constant $\mu(X)$ is equal to the maximum of the bridge constant $\beta(X,h)$ corresponding to $h$ and the bridge constant $\beta(X,-h)$ corresponding to the "reflected" graph height function $-h$.\par 

As a consequence of this theorem, all results discussed by {\sc Grimmett and Li} in section 5 of \cite{MR3862645} concerning locality of connected constants also hold in the case where the graph height functions are not unimodular. In particular they obtained conditions, under which the connective constants of sequences of graphs possessing unimodular graph height functions converges to the connective constant of a limit graph of this sequence. The proofs in the non-unimodular case work exactly the same after replacing corresponding results by the results obtained in this paper, so they will not be discussed here.\par

In Section \ref{sec:grandparent}, we provide a concrete example. The Grandparent graph was given by {\sc Trofimov} in \cite{MR811571} as an example of a connected, locally finite, transitive graph with a non-unimodular automorphism group, so it admits only non-unimodular graph height functions. We calculate the bridge constants with respect to the generic graph height function and use our version of the bridge theorem to obtain the connective constant of the Grandparent graph. This connective constant can also be obtained by using a different method described in \cite{AR005}.

\section{Terminology and preliminaries}
We consider a graph $(X,\sim)$ as a countable set of vertices $X$ together with a symmetric neighbourhood relation $\sim$ on $X$. We will usually write $X$ for the graph and omit $\sim$. Two vertices $u$ and $v$ are called \emph{adjacent} if $u \sim v$. The degree $deg_X(v)$ of a vertex $v$ in $X$ is the number of vertices of $X$ which are adjacent to $v$. We call $X$ \emph{locally finite}, if $deg_X(v)<\infty$ for every $v \in X$.\par

A \emph{walk} $\pi$ on a graph $X$ is a sequence of vertices $(v_0, v_1, \dots, v_n)$ of $X$ such that any two consecutive vertices of the sequence are adjacent in $X$. We denote by $\pi^-=v_0$ its first vertex, by $\pi^+=v_n$ its last vertex and by $\abs{\pi}=n$ the length of the walk. A walk $\pi$ is called \emph{self-avoiding} (or a SAW), if no vertex of $X$ occurs more than once in $\pi$. A graph $X$ is called connected if for any pair $(u,v)$ of vertices there is a walk on $X$ starting at $u$ and ending at $v$.\par
The \emph{automorphism group} $\AUT(X)$ of a  graph $X$ is the group of all permutations $\sigma$ on $X$ such that for all $u,v \in X$ it holds that $u \sim v$ if and only if $\sigma (u) \sim \sigma (v)$.
For a subgroup $\Gamma \leq \AUT(X)$ and some $v \in X$ we write $\Gamma v=\{\gamma v \mid \gamma \in \Gamma\}\subset X$ for the \emph{orbit} of $v$ under the action of $\Gamma$ and $\Gamma_v=\{\gamma \in \Gamma \mid \gamma v=v\} \subset \Gamma$ for the vertex stabilizer of $v$ in $\Gamma$. The group $\Gamma$ is said to act \emph{quasi-transitively} on $X$ if the action of $\Gamma$ on $X$ admits finitely many orbits and it acts \emph{transitively} if there is exactly one orbit. 
The graph $X$ is called \emph{transitive} (respectively \emph{quasi-transitive}) if its automorphism group $\AUT(X)$ acts transitively (respectively quasi-transitively) on $X$.\par
As we are interested in the growth rate of the number of SAWs of length $n$ for $n$ going to infinity, we only consider infinite graphs. We denote by $\mathcal{X}$ the set of all infinite, connected, locally finite and quasi-transitive graphs.\par
The following definition of graph height functions on graphs in $\mathcal{X}$ is taken from \cite{MR3862645}.
\begin{definition}\label{def:ghf}
Let $X \in \mathcal{X}$. A graph height function on $X$ is a pair $(h,\Gamma)$, where
\begin{enumerate}
\item[(i)] $h:X \rightarrow \mathbb{Z}$,
\item[(ii)] $\Gamma \leq \AUT(X)$ is a subgroup of graph automorphisms acting quasi-transitively on $X$ and $h$ is $\Gamma$-difference-invariant in the sense that
		\begin{equation*}
		h(\gamma v)-h(\gamma u)=h(v)-h(u) \quad \text{for all } \gamma \in \Gamma, \; u,v \in X,
		\end{equation*}
	\item[(iii)] for every $v \in X$, there exist $u,w \in X$ adjacent to $v$ such that 
	\begin{equation*}
	h(u) < h(v) < h(w).
	\end{equation*}
\end{enumerate} 
A graph height function $(h,\Gamma)$ is called \emph{unimodular} if the action of $\Gamma$ on $X$ is unimodular, i.e., if $\abs{\Gamma_u v} = \abs{\Gamma_v u}$ for all $u,v \in X$ with $v \in \Gamma u$.\\
Denote by $d=d(h,\Gamma)$ the smallest integer satisfying $h(u)-h(v)\leq d$ for all $u \sim v$. When talking about a graph height function $(h,\Gamma)$ we will often simply write $h$ and omit $\Gamma$.
\end{definition}

\begin{remark}
There are graphs in $\mathcal{X}$ which do not support any graph height functions. This is still true when considering the set of all Cayley graphs of finitely generated groups: it was shown in \cite{MR3743101}, that neither the Cayley graph of the Grigorchuk group nor the Cayley graph of the Higman group admits a graph height function.
\end{remark}

Let $X \in \mathcal{X}$ and $(h,\Gamma)$ be a graph height function on $X$. For any $v \in X$ we use the following notation for special sets of SAWs $(v=v_0,v_1, \dots,v_n)$ of length $n \geq 0$ on $X$ starting at $v$:
\begin{itemize}
\item $C_{n,v} \dots$ all SAWs.
\item $B_{n,v} \dots$ \emph{bridges}: $h(v)<h(v_i)\leq h(v_n)$ for all $i \in [1,n]$.
\item $\xbar{B}_{n,v} \dots$ \emph{reversed bridges}: $h(v)>h(v_i)\geq h(v_n)$ for all $i \in [1,n]$.
\item $H_{n,v} \dots$ \emph{half-space-walks} (HSW): $h(v)<h(v_i)$ for all $i \in [1,n]$.
\item $\xbar{H}_{n,v} \dots$ \emph{reversed half-space-walks}: $h(v)>h(v_i)$ for all $i \in [1,n]$.
\end{itemize}\par

By convention all of the above sets contain the walk of length $0$ consisting of the  single vertex $v$.
We denote by $c_{n,v},b_{n,v},\xbar{b}_{n,v},h_{n,v}$ and $\xbar{h}_{n,v}$ the cardinalities of the respective sets $C_{n,v},B_{n,v},\xbar{B}_{n,v},H_{n,v}$ and $\xbar{H}_{n,v}$. Note that because of symmetry, reversed bridges and reversed half-space-walks with respect to $h$ are bridges and HSWs with respect to the "reflected" height function $-h$, so we will state most results only for bridges and HSWs.\par

Let $\pi$ be a walk on $X$. The span of $\pi$ is defined as the maximal height difference of two vertices in $\pi$:
\begin{equation*}
\spa(\pi)=h_{max}(\pi)-h_{min}(\pi), \text{ where } 
\end{equation*}
\begin{equation*}
h_{max}(\pi)=\max_{v \in \pi} h(v), \quad h_{min}(\pi)=\min_{v \in \pi} h(v).
\end{equation*}
It is clear that for any bridge $\pi$, $ \, \spa(\pi)=h(v_n)-h(v_0)$.\par
 
The group $\Gamma$ acts quasi-transitively on $X$ and is $h$-difference invariant, so it is possible to define 
\begin{equation*}
c_n=\max_{v \in X}c_{n,v},\quad b_n=\min_{v \in X} b_{n,v}\quad \text{and} \quad \xbar{b}_n=\min_{v \in X} \xbar{b}_{n,v}.
\end{equation*}\par

Any SAW $(v=v_0, \dots, v_{n+m}) \in C_{n+m,v}$ can be decomposed into a pair $(v_0, \dots, v_n) \in C_{n,v}$ and $(v_n, \dots, v_{n+m}) \in C_{m,v_n}$ of SAWs. Picking $v$ such that $c_{n+m,v}=c_{n+m}$ results in
\begin{equation*}
c_{n+m}=c_{n+m,v}\leq c_{n,v} c_m \leq c_n c_m,
\end{equation*}
so $(c_n)_{n \geq 0}$ is a sub-multiplicative sequence. On the other hand the concatenation of the bridges $ (v=v_0, \dots, v_n) \in B_{n,v}$ and $(v_n, \dots, v_{n+m}) \in B_{m,v_n}$ results in the bridge $(v_0, \dots, v_{n+m}) \in B_{n+m,v}$. Picking $v$ such that $b_{n+m,v}=b_{n+m}$ yields
\begin{equation*}
b_n b_m \leq b_{n,v} b_m \leq b_{n+m,v} =b_{n+m}.
\end{equation*}
Fekete's Lemma on sub-additive sequences provides the existence of the limits
\begin{equation*}
\mu(X):=\lim_{n \rightarrow \infty} c_n^{1/n}, \quad \beta(X,h):=\lim_{n \rightarrow \infty} b_n^{1/n}, \quad \xbar{\beta}(X,h):=\lim_{n \rightarrow \infty} \xbar{b}_n^{1/n}.
\end{equation*}
Here $\mu(X)$ depends only on the underlying graph $X$ and is called the connective constant of $X$ and $\beta(X,h)$ and $\xbar{\beta}(X,h)$ depend on the graph and the chosen height function $h$ and are called the bridge constant and reversed bridge constant of $X$ with respect to $h$, respectively. We will usually omit the graph and the height function if they are clear and just write $\mu$, $\beta$ and $\xbar{\beta}$.\par
Trivially it holds that $b_{n}\leq c_{n}$, so we obtain
\begin{equation}\label{eq:betamuest}
b_n \leq \beta^n \leq \mu^n \leq c_n, \quad n \geq 0,
\end{equation}
and the similar statement for $\xbar{b}_n$ and $\xbar{\beta}$.
{\sc Hammersley} showed in \cite{MR0091568}, that
\begin{equation*}
\lim_{n \rightarrow \infty} c_{n,v}^{1/n} = \mu \quad \text{for every } v \in X
\end{equation*}
and {\sc Grimmett and Li} proved in \cite{MR3862645}  the similar statement
\begin{equation*}
\lim_{n \rightarrow \infty} b_{n,v}^{1/n} = \beta \quad \text{for every } v \in X.
\end{equation*}\par
We conclude that it is possible to obtain connected constant and bridge constant as the radius of convergence of the generating functions of self-avoiding walks and bridges starting at any vertex $v$, respectively, independent of the choice of $v$.
\section{The bridge theorem}

One of the main results of \cite{MR3862645} is the bridge theorem.

\begin{theorem}[Thm. 4.3 in \cite{MR3862645}]
Let $X \in \mathcal{X}$ possess an unimodular graph height function $(h,\Gamma)$. Then $\mu(X)=\beta(X,h)$.
\end{theorem}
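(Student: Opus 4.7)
The plan is to adapt the classical Hammersley--Welsh argument to the quasi-transitive setting, with unimodularity entering only in one decisive step at the end. First, I would establish the standard decomposition of half-space walks: every $\pi \in H_{n,v}$ can be written uniquely as $\pi = \pi_1 \pi_2 \cdots \pi_r$, where the $\pi_i$ are alternately bridges and reversed bridges (with $\pi_1$ a bridge) and whose spans satisfy $\spa(\pi_1) > \spa(\pi_2) > \cdots > \spa(\pi_r)$. The construction iterates: locate in $\pi$ the first vertex at which $h_{max}(\pi)$ is attained; the prefix up to that vertex is a bridge of span $\spa(\pi)$, and the suffix, starting at that vertex, is a reversed half-space walk of strictly smaller span. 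Recursion on the suffix yields the decomposition.

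Since the sequence of spans consists of strictly decreasing positive integers bounded by $nd$, the number of admissible span-sequences is at most $p_D(nd)$, the number of partitions of $nd$ into distinct parts, which grows like $\exp(O(\sqrt{n}))$ by Hardy--Ramanujan. Combined with the super-multiplicativity of $b_n$ and $\xbar{b}_n$ already established in the excerpt, this yields
\begin{equation*}
h_n \leq \exp(K \sqrt{n}) \max(b_n, \xbar{b}_n),
\end{equation*}
and an analogous bound for $\xbar{h}_n$. Next, every SAW splits at the first vertex $v_k$ realising its minimum height: the reversal of the initial segment is a half-space walk starting at $v_k$, and the final segment is (a suitably weak form of) a half-space walk starting at $v_k$. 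Using quasi-transitivity and the $\Gamma$-difference-invariance of $h$ to translate $v_k$ into a fixed orbit representative, one arrives at a bound
\begin{equation*}
c_n \leq C \sum_{k=0}^n \xbar{h}_k \, h_{n-k},
\end{equation*}
whence $\mu \leq \max(\beta, \xbar{\beta})$.

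The last and hardest step is to show $\beta = \xbar{\beta}$ under unimodularity. Walk-reversal identifies a reversed bridge from $v$ to $u$ with a bridge from $u$ to $v$ with respect to $-h$, so what must be compared is the count of bridges \emph{starting} at a vertex against the count \emph{ending} at that vertex. The relation $\abs{\Gamma_u v} = \abs{\Gamma_v u}$ characterising unimodularity underpins the mass-transport principle, which lets one average the count of bridges ending at a vertex over its $\Gamma$-orbit and reinterpret the total as a count of bridges starting at an equivalent vertex, yielding $\xbar{\beta} = \beta$. Together with the trivial bound $\beta \leq \mu$ from \eqref{eq:betamuest}, this delivers $\mu = \beta$. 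I expect this final step to be the principal obstacle: without unimodularity the counts of bridges entering and leaving a vertex can be genuinely asymmetric, as the Grandparent graph of Section~\ref{sec:grandparent} will illustrate, and only the weaker bound $\mu = \max(\beta, \xbar{\beta})$ survives.
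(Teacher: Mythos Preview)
This theorem is quoted from \cite{MR3862645} and not proved in the present paper; the paper instead proves the generalisation Theorem~\ref{thm:extbridgethm} ($\mu=\max\{\beta,\xbar{\beta}\}$), from which the unimodular statement would follow once one knows $\beta=\xbar{\beta}$. Your steps up to $\mu\le\max(\beta,\xbar{\beta})$ track the paper's argument for Theorem~\ref{thm:extbridgethm} closely: the same decomposition of a half-space walk into alternating bridges and reversed bridges of strictly decreasing spans, the Hardy--Ramanujan partition bound on the number of span-sequences, and the splitting of a SAW at a height-minimum into two half-space walks (Lemmas~\ref{lem:seqofhsws}--\ref{lem:cnestimate}). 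The paper phrases the key concatenation step as an explicit injection (Lemma~\ref{lem:seqofhsws}: gather the odd-indexed pieces into a single bridge $\pi_+$ and the even-indexed pieces into a single reversed bridge $\pi_-$) rather than invoking super-multiplicativity abstractly, and what actually drops out is $h_n\le P_D(dn)\sum_m b_m\xbar{b}_{n-m}\le e^{K\sqrt n}\beta_{max}^n$ rather than your $e^{K\sqrt n}\max(b_n,\xbar{b}_n)$; but this is cosmetic and does not affect the conclusion.

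Your final mass-transport step to obtain $\beta=\xbar{\beta}$ under unimodularity is the right idea and is not carried out here (the paper's purpose being precisely to dispense with unimodularity). One genuine subtlety to watch: walk-reversal does \emph{not} carry bridges to reversed bridges exactly, because the definition is asymmetric---strict inequality at the start vertex, non-strict at the end---so reversing a bridge may leave interior vertices at the same height as the new start vertex. You can repair this either by passing to the variant with strict inequality at both ends and checking it has the same exponential growth rate, or by applying mass-transport directly to the $\Gamma$-diagonally invariant function $f(u,v)=\#\{\text{bridges of length }n\text{ from }u\text{ to }v\}$: in the transitive unimodular case this gives $b_{n,o}=\sum_v f(o,v)=\sum_u f(u,o)$, and the right-hand side counts bridges \emph{ending} at $o$, which up to the strict/non-strict discrepancy is $\xbar{b}_{n,o}$.
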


Note that as a consequence, the bridge constant $\beta(X,h)$ does not depend on the choice of the unimodular graph height function $h$. However, there are simple examples showing that unimodularity is required in this theorem, one of them being the grandparent graph, which will be discussed in Section \ref{sec:grandparent}.\par 
The main result in this paper is the following extension of this bridge theorem, holding without the requirement of unimodularity. 

\begin{theorem}[General Bridge Theorem]\label{thm:extbridgethm}
Let $X \in \mathcal{X}$ be a graph possessing a graph height function $(h,\Gamma)$. Then 
\begin{equation}\label{eq:mubeta}
\mu(X)=\beta_{max}(X,h):=\max\{\beta(X,h),\xbar{\beta}(X,h)\}.
\end{equation}
\end{theorem}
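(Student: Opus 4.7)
The inequality $\beta_{max}(X,h) \leq \mu(X)$ is immediate from $b_n, \xbar{b}_n \leq c_n$, so the content of the theorem is the converse $\mu \leq \beta_{max}$. My plan is to run a version of the Hammersley--Welsh bridge decomposition: in the unimodular setting of \cite{MR3862645} a half-space walk is decomposed into bridges of strictly decreasing spans, whose counts control the count of self-avoiding walks up to sub-exponential factors. Without unimodularity the natural decomposition alternates between bridges and \emph{reversed} bridges, and both species must be tracked simultaneously---this is the source of the maximum in the statement.

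\textbf{Alternating decomposition.} Given $\pi = (v_0, \dots, v_n) \in H_{n, v_0}$, let $k_0$ be the last index on $\pi$ at which $h$ achieves its maximum. Then $(v_0, \dots, v_{k_0}) \in B_{k_0, v_0}$ is a bridge of span $s_0 := h(v_{k_0}) - h(v_0) \geq 1$, while $(v_{k_0}, \dots, v_n)$ is a reversed half-space walk from $v_{k_0}$ whose span is strictly less than $s_0$, because the suffix remains at heights above $h(v_0)$. Iterating---swapping the roles of maximum and minimum at each step---yields a unique decomposition $\pi = \pi_0 \pi_1 \cdots \pi_L$ where the pieces are glued at endpoints, $\pi_{2i}$ is a bridge and $\pi_{2i+1}$ is a reversed bridge, the spans $s_0 > s_1 > \cdots > s_L \geq 1$ are strictly decreasing, and the lengths $n_i = |\pi_i|$ satisfy $\sum n_i = n$ and $n_i \geq s_i / d$ with $d = d(h,\Gamma)$ from Definition \ref{def:ghf}. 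The symmetric statement for reversed half-space walks begins with a reversed bridge.

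\textbf{Counting and conclusion.} By Fekete's lemma applied to the super-multiplicative sequences $(b_n)$ and $(\xbar{b}_n)$, $b_n \leq \beta^n$ and $\xbar{b}_n \leq \xbar{\beta}^n$, hence both are $\leq \beta_{max}^n$. Quasi-transitivity of $\Gamma$ and the $\Gamma$-invariance of $h$ yield a constant $K$ such that the number of bridges or reversed bridges of length $m$ starting at any vertex is at most $K \beta_{max}^m$. For prescribed spans and lengths, the decomposition contributes at most $K^{L+1} \beta_{max}^n$; strictly decreasing span sequences of total sum at most $d n$ number $\exp(O(\sqrt{n}))$ (partitions into distinct parts), and the compositions of $n$ into the corresponding lengths contribute only polynomially, so $h_{n,o}, \xbar{h}_{n,o} \leq e^{o(n)} \beta_{max}^n$. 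A standard reduction (split a SAW at its maximum-height vertex into a reversed HSW and a walk of heights $\leq h(v_K)$, each of which admits an analogous alternating decomposition) as in \cite{MR3862645} then yields $c_{n,o} \leq e^{o(n)} \beta_{max}^n$. Taking $n$-th roots, $\mu \leq \beta_{max}$.

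\textbf{Main obstacle.} The principal novelty, and the main difficulty, is handling bridges and reversed bridges on equal footing: neither $\beta$ nor $\xbar{\beta}$ individually need equal $\mu$ in the non-unimodular case, and the alternating decomposition is the natural device that combines them. The strict span-decrease at each step is what keeps the combinatorial overhead sub-exponential; verifying this strict decrease rigorously, and checking that it carries through under arbitrary starting vertices, is essential. A subsidiary technical point is that $b_n$ is defined via a minimum over vertices while the counting needs a matching upper bound; this is handled by finitely-many-orbits plus $h$-difference-invariance to produce the uniform constant $K$ above.
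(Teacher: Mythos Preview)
Your proposal follows essentially the same route as the paper: the alternating bridge/reversed-bridge decomposition of half-space walks with strictly decreasing spans, the Hardy--Ramanujan partition bound, and the reduction from SAWs to HSWs by splitting at an extremal-height vertex. The one bookkeeping difference is that the paper, instead of bounding each piece separately and summing over length-compositions, concatenates all odd-indexed pieces into a single bridge $\pi_+$ and all even-indexed pieces into a single reversed bridge $\pi_-$ (Lemma~\ref{lem:seqofhsws}); this sidesteps your composition count, though in the quasi-transitive case the concatenation requires inserting short connecting walks between orbits (Lemma~\ref{lem:qthswdecomp}), whereas your piece-by-piece bound avoids that but relies on the uniform estimate $b_{n,v}\le\beta^{n+r}$ of (\ref{eq:bridgesbeta}) for your constant $K$. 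One correction: the number of compositions of $n$ into $L+1$ parts with $L=O(\sqrt{n})$ is not polynomial but at most $n^{O(\sqrt{n})}=e^{O(\sqrt{n}\log n)}$---still $e^{o(n)}$, so your conclusion is unaffected.
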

One inequality is clear from (\ref{eq:betamuest}), we only need to show $\mu \leq \beta_{max}$.
For convenience we first provide a detailed proof of the transitive case which we then generalize to the quasi-transitive case.

Let $X \in \mathcal{X}$ be a graph and $(h,\Gamma)$ be a graph height function on $X$ and assume that the group $\Gamma$ acts transitively on $X$. Then the value of $c_{n,v}$ does not depend on $v$ and is therefore equal to $c_n$. Moreover elements of $\Gamma$ map bridges onto bridges and HSWs onto HSWs, implying that also $b_{n,v}$ and $h_{n,v}$ do not depend on $v$, so we can omit $v$ in the notation.\par

For simplicity we fix some vertex $o$ of $X$ with $h(o)=0$ and write $C_n, B_n, H_n$ and $\xbar{B}_n, \xbar{H}_n$ for the sets of SAWs, bridges, HSWs and their reversed versions starting at $o$, respectively. Moreover for every $v \in X$ we fix some element $\gamma_v \in \Gamma$ with $\gamma_v(o)=v$. The concatenation of two walks $\pi_1=(o, v_1, \dots, v_m)$, $\pi_2=(o,w_1,\dots,w_n)$ is defined as the walk  
\begin{equation*} 
\pi_1\pi_2:= (o,v_1, \dots, v_m, \gamma_{v_m} (w_1), \dots, \gamma_{v_m}(w_n)). 
\end{equation*}
Similarly, the decomposition of $\pi_1$ at $v_l$ is defined to provide the two walks $(o,v_1, \dots, v_l)$ and $(\gamma_{v_l}^{-1}(v_l), \dots, \gamma_{v_l}^{-1}(v_n))$, both of them starting at $o$.

Denote by $B_{n}(a)$ the set of bridges in $B_n$ having span $a \geq 0$ and by $b_n(a)$ the cardinality of this set. Note that $b_0(0)=1$ and for $d=d(h,\Gamma)$ from Definition \ref{def:ghf} it trivially holds that $b_n(a)=0$ for $a>d n$ because the height distance per step is at most $d$. It follows that
\begin{equation}\label{eq:bridgessum}
b_n=\sum_{a=0}^{dn} b_n(a).
\end{equation}

Take any HSW $\pi=(v_0,v_1,\dots,v_n)$ of length $n \geq 1$. We use the following iterative process to decompose $\pi$ into an alternating sequence of bridges and reversed bridges: Let $i_0=0$ and in step $j \geq 1$ define  
\begin{equation*}
a_j= \max_{i\in [i_{j-1},n]} \abs{h(v_i)-h(v_{i_{j-1}})},
\end{equation*}
and $i_j$ as the largest index in $[i_{j-1},n]$, where the maximum is attained. Then the sub-walk $\pi_j=(v_{i_{j-1}}, \dots, v_{i_j})$ of $\pi$ is a bridge if $j$ is odd and a reversed bridge if $j$ is even. Moreover by definition the span of $\pi_j$ is $a_j$.\par
By construction the span decreases in every step, so $a_1 > \dots > a_k>0$. We denote by $H_n(a_1, \dots, a_k)$ the set of HSWs in $H_n$ decomposing into an alternating sequence of bridges and reversed bridges of spans $a_1, \dots, a_k$ and by $h_n(a_1,\dots,a_k)$ its cardinality. It is clear that for $n \geq 1$
\begin{equation}\label{eq:hswinspans}
h_n=\sum_{k>0} \sum_{a_1> \dots > a_k>0} h_n(a_1, \dots, a_k).
\end{equation}
Moreover for $k=1$ it follows directly from the definition that $h_n(a_1)=b_n(a_1)$.

\begin{lemma}\label{lem:seqofhsws}
Let $n, k \geq 1$ and $a_1 > a_2 > \dots > a_k >0$. Then
\begin{equation}\label{eq:conseqofhsws} 
h_n(a_1, \dots, a_k) \leq \sum_{m=0}^{n} b_m(a_1+a_3+\dots) \xbar{b}_{n-m} (a_2+a_4+\dots).
\end{equation}
\end{lemma}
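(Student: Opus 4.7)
The plan is to construct an injection
\begin{equation*}
\Phi\colon H_n(a_1,\dots,a_k) \hookrightarrow \bigsqcup_{m=0}^{n} B_m(\alpha)\times \xbar{B}_{n-m}(\beta),
\end{equation*}
where $\alpha:=a_1+a_3+\cdots$ and $\beta:=a_2+a_4+\cdots$; summing cardinalities over $m$ then yields (\ref{eq:conseqofhsws}). Given $\pi\in H_n(a_1,\dots,a_k)$, I would first apply the iterative decomposition described immediately before the lemma to split $\pi=\pi_1\pi_2\cdots\pi_k$, where $\pi_j$ is a bridge for odd $j$, a reversed bridge for even $j$, and has span $a_j$. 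After translating each piece by the appropriate $\gamma$-shift so that it starts at $o$, I would form the iterated concatenations
\begin{equation*}
\tilde B := \pi_1\pi_3\pi_5\cdots, \qquad \tilde B' := \pi_2\pi_4\pi_6\cdots
\end{equation*}
using the concatenation operation defined earlier, and set $\Phi(\pi):=(\tilde B,\tilde B')$ with $m:=\sum_{j\text{ odd}}|\pi_j|$.

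The next step is to verify that $\Phi$ takes values in the asserted set. The key inductive observation is that after attaching $\pi_1\pi_3\cdots\pi_{2\ell-1}$, which ends at height $a_1+a_3+\cdots+a_{2\ell-1}$, the shifted non-start vertices of the next piece $\pi_{2\ell+1}$ all lie strictly above this height, while every previously placed vertex has height at most $a_1+\cdots+a_{2\ell-1}$. Hence no collision is possible across pieces, so $\tilde B$ is self-avoiding, and a direct inspection shows it is a bridge of span $\alpha$ and length $m$. The symmetric argument, with the roles of increasing and decreasing heights exchanged, shows that $\tilde B'\in \xbar{B}_{n-m}(\beta)$.

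The hard part will be injectivity of $\Phi$ on $H_n(a_1,\dots,a_k)$. Since the tuple $(a_1,\dots,a_k)$ is fixed throughout, it suffices to recover the pieces $\pi_1,\pi_3,\dots$ from $\tilde B$ and $\pi_2,\pi_4,\dots$ from $\tilde B'$. In $\tilde B$, the endpoint of $\pi_1$ is characterized as the \emph{last} vertex of height $a_1$: by the decomposition rule the maximum of $\pi_1$ is attained last precisely at this endpoint, and every later vertex of $\tilde B$ has height strictly greater than $a_1$. Cutting $\tilde B$ at this vertex recovers $\pi_1$; iterating with target heights $a_1+a_3,\,a_1+a_3+a_5,\dots$ recovers $\pi_3,\pi_5,\dots$. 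An analogous procedure with target heights $-a_2,\,-(a_2+a_4),\dots$ extracts the even-indexed pieces from $\tilde B'$, and the alternating concatenation of all pieces reconstructs $\pi$. The whole argument depends essentially on the strict monotonicity $a_1>a_2>\cdots>a_k>0$ together with the ``last-maximum'' convention built into the decomposition: these two ingredients are exactly what force the interfaces between successive pieces to be uniquely detectable from height information alone.
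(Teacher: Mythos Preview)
Your proposal is correct and follows essentially the same route as the paper's proof: decompose $\pi$ into the alternating sequence $\pi_1,\dots,\pi_k$, concatenate the odd-indexed pieces into a bridge $\pi_+$ and the even-indexed pieces into a reversed bridge $\pi_-$, and observe that $(a_1,\dots,a_k)$ together with $(\pi_+,\pi_-)$ determines $\pi$. You spell out more carefully than the paper why the concatenations are self-avoiding and how the cut points are recovered from the height profile, but the underlying construction is identical.
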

\begin{proof}
Let $\pi \in H_n(a_1, \dots, a_k)$. We use the decomposition described before to construct a pair $(\pi_+, \pi_-)$ consisting of a bridge and a reversed bridge, both starting at $o$. We begin by decomposing $\pi$ into the walks $\pi_1, \dots, \pi_k$ such that the span of $\pi_i$ is $a_i$ for every $i$ and $\pi_i$ is a bridge if $i$ is odd and a reversed bridge otherwise. Let $\pi_+=\pi_1\pi_3\dots$ be the concatenation of all $\pi_i$ which are bridges ($i$ odd) and $\pi_-=\pi_2\pi_4\dots$ be the concatenation the $\pi_i$ which are reversed bridges ($i$ even). This construction can be seen in Figure \ref{fig:concatbridges}.\par
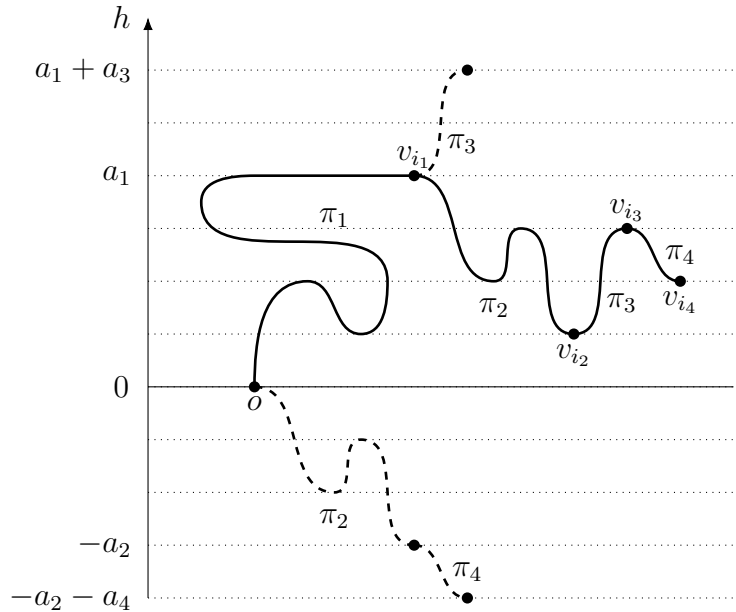
\begin{figure}[ht]
\centering
\begin{tikzpicture}
\begin{scope}[scale=0.7]
\foreach \i in {-4,-3,-2,-1,0,1,2,3,4,5,6}{
\draw[dotted] (0,\i*1) -- (11,\i*1);
}
\draw[-Latex] (0,-4) -- (0,7);
\draw (0,0) -- (11,0);
\fill (2,0) circle (3pt);
\fill (5,4) circle (3pt);
\fill (8,1) circle (3pt);
\fill (9,3) circle (3pt);
\fill (10,2) circle (3pt);
\fill (5,-3) circle (3pt);
\fill (6,6) circle (3pt);
\fill (6,-4) circle (3pt);

\node at (-0.5,7) {$h$};
\node at (-0.5,0) {0};
\node at (2,-0.3) {$o$};
\node at (5,4.35) {$v_{i_1}$};
\node at (8,0.6) {$v_{i_2}$};
\node at (9,3.35) {$v_{i_3}$};
\node at (10,1.6) {$v_{i_4}$};
\node[anchor=east] at (-0.1,4) {$a_1$};
\node[anchor=east] at (-0.1,6) {$a_1+a_3$};
\node[anchor=east] at (-0.1,-3) {$-a_2$};
\node[anchor=east] at (-0.1,-4) {$-a_2-a_4$};
\node at (3.5,3.2) {$\pi_1$};
\node at (6.5,1.5) {$\pi_2$};
\node at (8.9,1.6) {$\pi_3$};
\node at (10,2.5) {$\pi_4$};
\node at (3.5,-2.5) {$\pi_2$};
\node at (5.9,4.6) {$\pi_3$};
\node at (6,-3.5) {$\pi_4$};

\draw[line width=1pt] (2,0) to[out=90, in=180] (3,2) to[out=0, in=180] (4,1) to[out=0, in=-90] (4.5,2) to[out=90, in=-90] (1,3.5) to[out=90, in=180] (2,4) to[out=0, in=180] (5,4);
\draw[line width=1pt] (5,4) to[out=0, in=180] (6.5,2) to[out=0, in=180] (7,3) to[out=0, in=180] (8,1); 
\draw[line width=1pt] (8,1) to[out=0, in=180] (9,3); 
\draw[line width=1pt] (9,3) to[out=0, in=180] (10,2);

\draw[line width=1pt, dashed] (5,4) to[out=0, in=180] (6,6);
\draw[line width=1pt, dashed] (2,0) to[out=0, in=180] (3.5,-2) to[out=0, in=180] (4,-1) to[out=0, in=180] (5,-3);  
\draw[line width=1pt, dashed] (5,-3) to[out=0, in=180] (6,-4);
\end{scope}
\end{tikzpicture}
	\caption{Decomposition of a HSW $\pi$ into bridges and reversed bridges and construction of $\pi_+$ and $\pi_-$ (dashed).}
	\label{fig:concatbridges}
\end{figure}
Clearly $\pi_+$ is a bridge and its span is $a_1+a_3+\dots$ and $\pi_-$ is a reversed bridge and its span is $a_2+a_4+\dots$. Moreover from the knowledge of the sequence $a_1, \dots, a_k$ and the two walks $\pi_+$ and $\pi_-$ the original walk $\pi$ can be uniquely constructed, so the construction of the pair $(\pi_+, \pi_-)$ is injective. 
Let $m =\abs{\pi_1}+ \abs{\pi_3}+ \dots$ be the sum of lengths of the odd-index sub-walks $\pi_i$. Then $\pi_+ \in B_m(a_1+a_3+\dots)$ and $\pi_- \in \xbar{B}_{n-m} (a_2+a_4+\dots)$ and (\ref{eq:conseqofhsws}) follows.
\end{proof}

A partition into distinct integers of a positive integer $A$ is a way to write $A$ as a sum of distinct positive integers. Two partitions are considered the same if they differ only in the order of their summands. Denote by $P_D(A)$ the number of different partitions of the integer $A\geq 1$. For consistency let $P_D(0)=1$. {\sc Hardy and Ramanujan} showed in \cite{MR2280877} that for $A \rightarrow \infty$:
\begin{equation}\label{eq:ramanujan}
\log P_D(A) \sim \pi \left( \frac{A}{3} \right)^{1/2}.
\end{equation}

\begin{lemma}\label{lem:estimatehsws}
Let $B> \pi \sqrt{d/3}$. Then there is a constant $K> 0$ such that for all $n \geq 0$
\begin{equation}\label{eq:estimatehsws}
h_n \leq P_D(dn) \sum_{m =0}^{n} b_m \xbar{b}_{n-m} \leq K e^{B\sqrt{n}} \beta_{max}^n.
\end{equation}
\end{lemma}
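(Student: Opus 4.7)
The plan is to chain together two estimates: a combinatorial bound derived from Lemma~\ref{lem:seqofhsws} and (\ref{eq:hswinspans}) for the first inequality, and the Hardy--Ramanujan asymptotics (\ref{eq:ramanujan}) together with (\ref{eq:betamuest}) for the second. First I would substitute the bound of Lemma~\ref{lem:seqofhsws} into (\ref{eq:hswinspans}) and exchange the order of summation to obtain
\begin{equation*}
h_n \leq \sum_{m=0}^{n}\, \sum_{k \geq 1}\, \sum_{a_1 > \dots > a_k > 0} b_m(A_o)\, \xbar{b}_{n-m}(A_e),
\end{equation*}
where $A_o = a_1 + a_3 + \dots$ and $A_e = a_2 + a_4 + \dots$. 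The key observation is then a double counting: the map sending a strictly decreasing sequence $(a_1,\dots,a_k)$ to the set $\{a_1,\dots,a_k\}$ is a bijection onto partitions of $A_o + A_e$ into distinct parts, so the number of sequences with prescribed sums $A_o$ and $A_e$ is at most $P_D(A_o + A_e)$.

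Since $b_m(A_o)\,\xbar{b}_{n-m}(A_e)$ vanishes unless $A_o \leq dm$ and $A_e \leq d(n-m)$, every nonzero contribution satisfies $A_o + A_e \leq dn$. Combined with the monotonicity of $P_D$ (which follows from the injection that adds $1$ to the largest part of a distinct-parts partition), this gives the uniform bound $P_D(A_o + A_e) \leq P_D(dn)$. Pulling this factor out of the sum and using $\sum_{a} b_m(a) = b_m$ together with the analogous identity for $\xbar{b}_{n-m}$ yields the first inequality in (\ref{eq:estimatehsws}). For the second inequality I would invoke (\ref{eq:betamuest}) in the form $b_m \leq \beta^m$ and the analogous $\xbar{b}_{n-m} \leq \xbar{\beta}^{n-m}$ to get $\sum_{m=0}^n b_m \xbar{b}_{n-m} \leq (n+1)\beta_{max}^n$. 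Since (\ref{eq:ramanujan}) gives $\log P_D(dn) \sim \pi\sqrt{d/3}\,\sqrt{n}$, for any $B > \pi\sqrt{d/3}$ the slack $B - \pi\sqrt{d/3}$ produces a factor $e^{\varepsilon \sqrt{n}}$ which absorbs the polynomial $n+1$, so $(n+1)P_D(dn) \leq K e^{B\sqrt{n}}$ uniformly for an appropriate constant $K$.

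The main obstacle I anticipate is the counting step that produces exactly $P_D(dn)$ rather than the weaker $\sum_{A \leq dn} P_D(A)$: it requires recognizing that the preimages of $(A_o, A_e)$ under the odd/even-sum map form partitions of $A_o + A_e$ into distinct parts, not independent partitions of $A_o$ and $A_e$, which relies essentially on the strict-ordering requirement $a_1 > \dots > a_k$ that couples the two groups of indices into a single partition. Once this point and the monotonicity of $P_D$ are in place, the Hardy--Ramanujan step is routine, as choosing $B$ strictly larger than $\pi\sqrt{d/3}$ leaves enough room to absorb any polynomial prefactor.
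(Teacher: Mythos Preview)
Your proposal is correct and follows essentially the same route as the paper: substitute Lemma~\ref{lem:seqofhsws} into (\ref{eq:hswinspans}), bound the number of admissible sequences $(a_1,\dots,a_k)$ with given odd/even sums by $P_D(A_o+A_e)\leq P_D(dn)$, collapse the span sums via (\ref{eq:bridgessum}), and then apply (\ref{eq:betamuest}) together with (\ref{eq:ramanujan}) to absorb the factor $(n+1)P_D(dn)$ into $K e^{B\sqrt{n}}$. The only cosmetic differences are that the paper treats $n=0$ separately and leaves the monotonicity of $P_D$ implicit, whereas you justify it explicitly.
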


\begin{proof}
The statement trivially holds for $n=0$. In the case $n>0$ application of Lemma \ref{lem:seqofhsws} in expression (\ref{eq:hswinspans}) and exchanging the finite sums yields
\begin{equation*}
h_n \leq \sum_{m=0}^{n} \sum_{k>0} \sum_{a_1> \dots > a_k>0} b_m(a_1+a_3+\dots) \xbar{b}_{n-m} (a_2+a_4+\dots).
\end{equation*}\par

For given $A,B \geq 0$ we want to count the number of occurrences of the summand $b_m(A) \xbar{b}_{n-m}(B)$ in the sum on the right-hand side. Clearly the total number of sequences $a_1>\dots> a_k>0$ with $a_1+a_3+\dots=A$ and $a_2+a_4+ \dots=B$ is bounded from above by the number $P_D(A+B)$. Using that the height distance per step is at most $d$, it follows that
\begin{equation*}
h_n \leq \sum_{m=0}^{n} \sum_{A=0}^{dm} \sum_{B=0}^{d(n-m)} P_D(A+B) b_m(A) \xbar{b}_{n-m}(B).
\end{equation*}
From $P_D(A+B)\leq P_D(dn)$ and (\ref{eq:bridgessum}) the first inequality in (\ref{eq:estimatehsws}) follows:
\begin{equation*}
\begin{split}
h_n &\leq P_D(dn) \sum_{m=0}^{n} \left( \sum_{A=0}^{dm} b_m(A)\right) \left( \sum_{B=0}^{d(n-m)} \xbar{b}_{n-m}(B)\right)\\ &= P_D (dn) \sum_{m=0}^{n} b_m \xbar{b}_{n-m}.
\end{split}
\end{equation*}
The second inequality in (\ref{eq:estimatehsws}) follows from $b_n \leq \beta^n$ (see (\ref{eq:betamuest})) and the existence of a constant $K>0$ such that
\begin{equation*}
(n+1)P_D(dn) \leq Ke^{B\sqrt{n}}
\end{equation*}
for every $n>0$, which is a consequence of (\ref{eq:ramanujan}).
\end{proof}

\begin{lemma} \label{lem:cnestimate}
Let $C> \pi \sqrt{2d/3}$. Then there is an integer $N$ such that for all $n \geq N$
\begin{equation}\label{eq:cnestimate}
c_n \leq e^{C\sqrt{n+1}} \beta_{max}^{n+1}.
\end{equation}

\end{lemma}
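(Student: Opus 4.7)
The plan is a Hammersley--Welsh style decomposition: every SAW of length $n$ starting at $o$ is broken into two HSWs whose lengths sum to $n+1$, and the estimate from Lemma~\ref{lem:estimatehsws} is then applied factor by factor.

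Given $\pi=(o=v_0,v_1,\dots,v_n)\in C_n$, I would let $j$ be the largest index at which $h(v_i)$ attains its minimum, so that $h(v_i)>h(v_j)$ for $i>j$ and $\pi_2:=(v_j,v_{j+1},\dots,v_n)$ is an HSW of length $n-j$ starting at $v_j$. By Definition~\ref{def:ghf}(iii) the vertex $v_j$ has a neighbour $u$ with $h(u)<h(v_j)$; fix a canonical choice of such a neighbour (for instance, according to some prescribed ordering of $X$). Since $h(u)<h(v_j)\leq h(v_i)$ for all $i$, the vertex $u$ is distinct from every $v_i$, and so $\tilde\pi_1:=(u,v_j,v_{j-1},\dots,v_0)$ is an HSW of length $j+1$ starting at $u$ and ending at $o$. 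The map $\pi\mapsto(\tilde\pi_1,\pi_2)$ is injective, because reversing $\tilde\pi_1$, dropping $u$, and concatenating with $\pi_2$ at $v_j$ reconstructs $\pi$.

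Next I would estimate the number of admissible pairs $(\tilde\pi_1,\pi_2)$. In the transitive case the number of HSWs of fixed length starting at any given vertex equals $h_{n-j}$; after transporting $\tilde\pi_1$ to start at $o$ using the fixed transitive shifts $\gamma_v\in\Gamma$ (absorbing any stabiliser-induced multiplicity into a constant), one obtains a bound of the shape
\[
c_n \;\leq\; K_0\sum_{j=0}^{n} h_{j+1}\,h_{n-j}
\]
for some constant $K_0$ independent of $n$. Inserting $h_m\leq K e^{B\sqrt{m}}\beta_{max}^m$ from Lemma~\ref{lem:estimatehsws} and using the elementary inequality $\sqrt{j+1}+\sqrt{n-j}\leq\sqrt{2(n+1)}$ (a form of Cauchy--Schwarz), the right-hand side is at most $(n+1)K_0K^2 e^{B\sqrt 2\sqrt{n+1}}\beta_{max}^{n+1}$. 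Given $C>\pi\sqrt{2d/3}$, one picks $B>\pi\sqrt{d/3}$ with $B\sqrt 2<C$; the polynomial and constant prefactors are then dominated by $e^{(C-B\sqrt 2)\sqrt{n+1}}$ once $n$ is large enough, which yields~(\ref{eq:cnestimate}).

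The hard step is justifying the bound $c_n\leq K_0\sum h_{j+1}h_{n-j}$: without unimodularity the number of HSWs ending at a fixed vertex and the number of HSWs starting there need not coincide, so one must use $\Gamma$-difference-invariance of $h$ to translate ending-HSWs into starting-HSWs via the shifts $\gamma_u^{-1}$, and then control the multiplicity of the map $u\mapsto\gamma_u^{-1}(o)$. Once this combinatorial bookkeeping is in place, the remainder of the proof is the routine exponential estimation described above.
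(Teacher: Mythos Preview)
Your approach is the same as the paper's: split $\pi$ at the last index $l$ of minimal height, prepend a lower neighbour of $v_l$ to the reversed initial segment to get an HSW of length $l{+}1$, pair it with the HSW $(v_l,\dots,v_n)$ of length $n{-}l$, apply Lemma~\ref{lem:estimatehsws} to each factor, and use $\sqrt{l+1}+\sqrt{n-l}\le\sqrt{2(n+1)}$ to absorb the exponents.

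The paper records the decomposition simply as
\[
c_n \;\le\; \sum_{l=0}^{n} h_{n-l}\,h_{l+1},
\]
with no constant $K_0$ and no discussion of stabiliser multiplicity; it just notes that the two pieces are HSWs of the indicated lengths (using that $h_{m,v}$ is independent of $v$ in the transitive case) and proceeds directly to the estimate. Your last paragraph therefore introduces a complication the paper does not engage with, and you do not actually carry out the ``bookkeeping'' you describe --- so as written it is an acknowledged gap rather than an argument. For the purpose of reproducing the paper's proof you can drop $K_0$ and the surrounding discussion. That said, the subtlety you flag --- that the number of HSWs \emph{ending} at $o$ need not equal the number \emph{starting} at $o$ in the non-unimodular setting --- is genuine, and the paper is equally informal at this step.
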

\begin{proof}
Let $\pi=(v_0, \dots, v_n)$ be any SAW of length $n$ and $l$ the maximal index in $[0,n]$ such that $h(v_l)=h_{min}(\pi)$. By the definition of graph height functions there is a neighbour $v'$ of $v_l$ with $h(v')<h(v_l)$. Hence $(v_l,v_{l+1}, \dots, v_n)$ and $(v',v_l,v_{l-1}, \dots, v_0)$ are HSWs in $H_{n-l}$ and $H_{l+1}$ respectively. This construction is shown in Figure \ref{fig:decompsaw} and yields 
\begin{align}
c_n &\leq \sum_{l=0}^n h_{n-l} h_{l+1}.
\end{align}
\begin{figure}[ht]
\centering
\begin{tikzpicture}
\begin{scope}[scale=1.2]
\foreach \i in {0,1,2,3,4}{
\draw[dotted] (0,\i*1) -- (7,\i*1);
}
\draw[-Latex] (0,0) -- (0,4.5);
\fill (1,3) circle (1.5pt);
\fill (2.5,1) circle (1.5pt);
\fill (3,1) circle (1.5pt);
\fill (3.4,1.4) circle (1.5pt);
\fill (5,3) circle (1.5pt);
\fill (3,0.5) circle (1.5pt);

\node at (-0.5,4.5) {$h$};
\node at (1,2.7) {$v_0$};
\node at (2.5,0.7) {$v_{l-1}$};
\node at (3,1.3) {$v_l$};
\node at (3.8,1.2) {$v_{l+1}$};
\node at (5,3.3) {$v_n$};
\node at (3.3,0.5) {$v'$};
\draw[line width=0.8pt] (1,3) to[out=45, in=180] (2,3.5) to[out=0, in=90] (2.5,2.5) to[out=-90, in=90] (1.5,1.5) to[out=-90, in=180] (2.5,1) to (3,1) to (3.4,1.4) to[out=45, in=180] (4,3) to[out=0, in=180] (5.5,1.5) to[out=0, in=-90] (6,2.3) to[out=90, in=-45] (5,3);
\draw[line width=0.8pt] (3,0.5)--(3,1);
\end{scope}
\end{tikzpicture}
	\caption{Decomposition of a SAW into two HSWs.}
	\label{fig:decompsaw}
\end{figure}
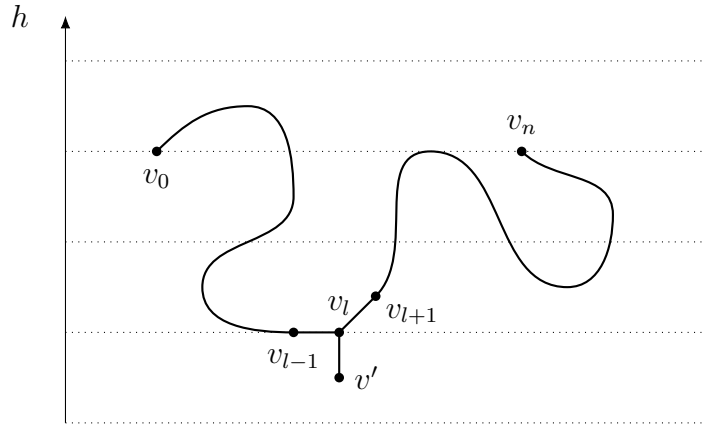

Let $\epsilon>0$ such that $C-\epsilon > \pi \sqrt{2d/3}$. By Lemma \ref{lem:estimatehsws} there is a $K>0$ such that for every $n\geq0$
\begin{equation*}
c_n \leq \sum_{l=0}^n K^2 \exp \left(\frac{C-\epsilon}{\sqrt{2}}\left(\sqrt{n-l}+\sqrt{l+1}\right)\right)  \beta_{max}^{n+1}.
\end{equation*}
Using this estimate and the inequality $\sqrt{\vphantom{b}a}+\sqrt{b} \leq \sqrt{2a+2b}$, which holds for all $a,b \in \mathbb{R}^+$ we obtain
\begin{align}
c_n \leq (n+1) K^2 \exp \left( (C-\epsilon) \sqrt{n+1} \right) \beta_{max}^{n+1}.
\end{align}
For $n$ large enough (\ref{eq:cnestimate}) follows.
\end{proof}

We are now able to finish the proof of Theorem \ref{thm:extbridgethm}. 
Using $\mu^n \leq c_n$ (see (\ref{eq:betamuest})) and Lemma \ref{lem:cnestimate} it follows that for $C > \pi \sqrt{2d/3}$ and $n$ large enough
\begin{equation*}
\mu^{n-1} e^{-C \sqrt{n}} \leq c_{n-1} e^{-C \sqrt{n}} \leq \beta_{max}^n.
\end{equation*}
Applying the $n$-th root and sending $n$ to infinity yields
\begin{equation*}
\mu \leq \beta_{max},
\end{equation*}
finishing the proof of Theorem \ref{thm:extbridgethm} in the transitive case.

We will now briefly discuss the additional steps required to generalize the proof to the case where $\Gamma$ acts quasi-transitively on the graph $X$. For this we need a few additional definitions and results from \cite{MR3862645}.\par

Let the action of $\Gamma$ on $X$ admit $M$ orbits and $\{o_1, \dots, o_M\}$ be a system of representatives of the orbits. Let $r=r(h,\Gamma)$ be the smallest non-negative integer such that for any $0 \leq i,j\leq M$ there is some $v_j \in \Gamma o_j $ and a bridge $\nu(i,j)$ of length at most $r$ starting at $o_i$ and ending at $v_j$, such that $v_j$ is the unique vertex of maximal height in the walk. The walk obtained by going along $\nu(j,i)$ in the reversed direction (from $v_j$ to $o_i$) is a reversed bridge and will be denoted by $\xbar{\nu}(i,j)$.
It has been shown in \cite{MR3862645} (Propositions 3.2 and 4.2)  that $r(h,\Gamma)$ exists for any graph height function and moreover that for any $v \in X$
\begin{equation}\label{eq:bridgesbeta}
b_{n,v}\leq \beta^{n+r}, \quad n\geq 0.
\end{equation}

From now on we denote for $v \in X$ and $a \geq 0$ by $B_{n,v}(a)$ the set of bridges of span $a$ starting at $v$ and by $b_{n,v}(a)$ the cardinality of this set. 

Furthermore, for any $v \in X$ and $a_1> \dots >a_k>0$ let $H_{n,v}(a_1, \dots, a_k)$ be the set of HSWs in $H_{n,v}$, which can be decomposed into an alternating sequence of bridges and reversed bridges of spans $a_1, \dots, a_k$ as described in the transitive case and let $h_{n,v}(a_1, \dots, a_k)$ be its cardinality. Lemma \ref{lem:seqofhsws} can be replaced by the following:

\begin{lemma}\label{lem:qthswdecomp}
Let $n,k \geq 1$, $a_1>a_2 >\dots > a_k >0$ and $v \in X$. Then
\begin{equation}\label{eq:qthswdecomp}
\begin{split}
h_{n,v}(a_1, \dots, a_k) \leq (r+1)^{k-1} \sum_{m=0}^n \left( \sum_{s=0}^{k r} \sum_{t\geq 0} b_{m+s,v}(a_1+a_3+\dots+t)\right) \\ \left( \sum_{s'=0}^{k r} \sum_{t'\geq 0} \xbar{b}_{n-m+s',v}(a_2+a_4+\dots+t')\right).
\end{split}
\end{equation} 
\end{lemma}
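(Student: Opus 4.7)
The plan is to mimic the decomposition in the proof of Lemma \ref{lem:seqofhsws}. Given $\pi \in H_{n,v}(a_1,\dots,a_k)$, I would apply the same iterative span-decomposition to obtain sub-walks $\pi_1, \dots, \pi_k$ of alternating bridge/reversed-bridge type and spans $a_1, \dots, a_k$. The goal is to assemble a bridge $\pi_+$ from the odd-indexed pieces and a reversed bridge $\pi_-$ from the even-indexed pieces, both starting at $v$, in such a way that the resulting map $\pi \mapsto (\pi_+, \pi_-)$ is controllably many-to-one.

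The principal new obstacle over the transitive case is that the endpoint of one sub-walk need not lie in the same $\Gamma$-orbit as the start of the next one we want to attach, so the translation step is not directly available. The remedy is to insert between consecutive pieces a short connector of the form $\gamma \cdot \nu(i^*,j^*)$ (or $\gamma \cdot \xbar{\nu}(i^*,j^*)$ for $\pi_-$), with orbit indices $i^*,j^*$ chosen appropriately and $\gamma \in \Gamma$ chosen so that the translate begins at the current endpoint. Since $\nu(i^*,j^*)$ has length at most $r$ and its unique maximum-height vertex is its endpoint, one checks inductively that appending such a connector followed by a suitable translate of the next sub-walk preserves both self-avoidance and the bridge property. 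Counting connectors gives $\lceil k/2 \rceil - 1$ of them in $\pi_+$ and $\lfloor k/2 \rfloor$ in $\pi_-$ --- the extra one in $\pi_-$ serves to move from $v$ into the orbit of the starting vertex of $\pi_2$ --- so at most $k-1$ in total, each of length in $\{0,1,\dots,r\}$. Writing $s, s'$ for the total connector lengths and $t, t' \geq 0$ for the total spans contributed by connectors in $\pi_+, \pi_-$ respectively, and setting $m = \abs{\pi_1} + \abs{\pi_3} + \cdots$, the resulting $\pi_+$ is a bridge starting at $v$ of length $m + s$ and span $a_1 + a_3 + \cdots + t$, while $\pi_-$ is a reversed bridge starting at $v$ of length $(n-m) + s'$ and span $a_2 + a_4 + \cdots + t'$; the crude bound $s, s' \leq k r$ holds.

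The most delicate step I expect is quantifying the overcount of the map. Once the ordered sequence of connector lengths has been specified --- at most $(r+1)^{k-1}$ possibilities in total --- together with $a_1, \dots, a_k$ and the pair $(\pi_+, \pi_-)$, the original walk $\pi$ can be reconstructed uniquely: from the start of $\pi_+$ one reads off the sub-walk $\pi_1$ as the initial portion of span $a_1$, skips the prescribed number of connector steps, reads off the translate of $\pi_3$ by its span $a_3$, and so on, proceeding analogously through $\pi_-$; the alternation then interleaves the pieces back into $\pi$. Summing the resulting bound over $m \in [0,n]$, over $s, s' \in [0, kr]$, over $t, t' \geq 0$, and over the at most $(r+1)^{k-1}$ connector-length sequences yields (\ref{eq:qthswdecomp}).
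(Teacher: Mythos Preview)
Your proposal is correct and follows essentially the same approach as the paper: decompose $\pi$ into the alternating pieces $\pi_1,\dots,\pi_k$, splice the odd-indexed pieces into a bridge $\pi_+$ and the even-indexed pieces (preceded by an initial connector from $v$) into a reversed bridge $\pi_-$, using the short orbit-changing walks $\nu(i,j)$ (respectively $\xbar{\nu}(i,j)$) of length at most $r$ as connectors, and then bound the multiplicity of the map by the $(r+1)^{k-1}$ possible sequences of connector lengths. Your count of $\lceil k/2\rceil-1$ connectors in $\pi_+$ and $\lfloor k/2\rfloor$ in $\pi_-$, together with the reconstruction argument via the spans $a_j$ and the specified connector lengths, matches the paper's reasoning exactly.
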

\begin{proof}
Given a walk $\pi$ in $H_{n,v}(a_1, \dots, a_k)$, we decompose it into the alternating sequence $(\pi_1, \pi_2, \dots, \pi_k)$ of bridges and reversed bridges as in the transitive case. The main difficulty is that it is not always possible to concatenate the bridges $\pi_l$ and $\pi_{l+2}$ directly, as $\pi_l^+$ and $\pi_{l+2}^-$ may lie in different orbits. Let $(i(0), \dots, i(k-2))$ and $(j(0), \dots, j(k-2))$ be sequences defined such that $v \in \Gamma o_{i(0)}$, $\pi_l^+ \in \Gamma o_{i(l)}$ for $l\geq 1$ and $\pi_{l+2}^- \in \Gamma o_{j(l)}$ for $l \geq 0$. Let 
\begin{equation*}
\nu_l=\begin{cases} \nu(i(l),j(l)) \quad \text{ if $l$ is odd,} \\
\xbar{\nu}(i(l),j(l))\quad \text{ if $l$ is even}. 
\end{cases}
\end{equation*}
Define $\pi_+$ as the concatenation of the bridges $\pi_1, \nu_1, \pi_3, \nu_3, \dots$ (odd indices) and $\pi_-$ as the concatenation of the reversed bridges $\nu_0, \pi_2, \nu_2, \pi_4, \dots$ (even indices). This construction can be seen in Figure \ref{fig:qtconcatbridges}.\par 
\begin{figure}[ht]
\centering
\begin{tikzpicture}
\begin{scope}[scale=0.65]
\foreach \i in {-5,-4,-3,-2,-1,0,1,2,3,4,5,6}{
\draw[dotted] (0,\i*1) -- (11,\i*1);
}
\draw[-Latex] (0,-5.5) -- (0,7);
\draw (0,0) -- (11,0);
\fill (2,0) circle (3pt);
\fill (5,4) circle (3pt);
\fill (8,1) circle (3pt);
\fill (9,3) circle (3pt);
\fill (10,2) circle (3pt);

\node at (-0.5,7) {$h$};
\node at (1.7,0.3) {$v$};
\node at (3.5,3.2) {$\pi_1$};
\node at (6.5,1.5) {$\pi_2$};
\node at (8.9,1.6) {$\pi_3$};
\node at (10,2.5) {$\pi_4$};
\node at (2.2,-0.5) {$\nu_0$};

\draw[line width=1pt] (2,0) to[out=90, in=180] (3,2) to[out=0, in=180] (4,1) to[out=0, in=-90] (4.5,2) to[out=90, in=-90] (1,3.5) to[out=90, in=180] (2,4) to[out=0, in=180] (5,4);
\draw[line width=1pt] (5,4) to[out=0, in=180] (6.5,2) to[out=0, in=180] (7,3) to[out=0, in=180] (8,1); 
\draw[line width=1pt] (8,1) to[out=0, in=180] (9,3); 
\draw[line width=1pt] (9,3) to[out=0, in=180] (10,2);
\draw[line width=1pt,densely dotted] (5,4) -- ++(0.7,0.7);
\draw[line width=1pt,densely dotted] (2,0) -- ++(-0.7,-0.7);
\node at (5,4.5) {$\nu_1$};

\begin{scope}[xshift=0.7cm, yshift=0.7cm]
\draw[line width=1pt, dashed] (5,4) to[out=0, in=180] (6,6);
\node at (5.9,4.8) {$\pi_3$};
\fill (6,6) circle (3pt);
\fill (5,4) circle (3pt);
\end{scope}

\begin{scope}[xshift=-0.7cm,yshift=-0.7cm]
\draw[line width=1pt, dashed] (2,0) to[out=0, in=180] (3.5,-2) to[out=0, in=180] (4,-1) to[out=0, in=180] (5,-3);
\fill (2,0) circle (3pt);
\fill (5,-3) circle (3pt);
\draw[line width=1pt,densely dotted] (5,-3) -- ++(0.7,-0.7);
\node at (4.9,-3.7) {$\nu_2$};
\node at (2.7,-2) {$\pi_2$};
\end{scope}

\begin{scope}[xshift=0cm,yshift=-1.4cm]  
\draw[line width=1pt, dashed] (5,-3) to[out=0, in=180] (6,-4);
\fill (6,-4) circle (3pt);
\fill (5,-3) circle (3pt);
\node at (6,-3.3) {$\pi_4$};
\end{scope}
\end{scope}
\end{tikzpicture}
	\caption{Construction of $\pi_+$ and $\pi_-$, dotted lines are $\nu$-walks.}
	\label{fig:qtconcatbridges}
\end{figure}
Let $m$ be the sum of the lengths of $\pi_i$ having odd index $i$. Then $\pi_+$ is in $B_{m+s,v}(a_1+a_3+\dots+t)$ for some $0 \leq s \leq kr$ and $t\geq 0$ and $\pi_-$ is in $B_{n-m+s',v}(a_2+a_4+\dots+t')$ for some $0 \leq s' \leq kr$ and $t'\geq 0$ as every $\nu$-walk has length at most $r$. The construction is not injective because for a given pair $(\pi_+,\pi_-)$ we cannot directly identify the contained $\nu$-walks. However, the length of any $\nu$-walk is in $[0,r]$, so there are at most $(r+1)$ possibilities per $\nu$-walk. Therefore any pair $(\pi_+,\pi_-)$ can be constructed at most $(r+1)^{k-1}$ times and (\ref{eq:qthswdecomp}) follows.
\end{proof}

Lemma \ref{lem:estimatehsws} is replaced by the following:
\begin{lemma}
There is a constant $B>0$ such that for any $n \geq 0$ and $v \in X$ 
\begin{equation}\label{eq:qthswestimate}
h_{n,v} \leq e^{B \sqrt{n}} \beta_{max}^n.
\end{equation}
\end{lemma}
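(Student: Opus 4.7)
The strategy parallels the transitive Lemma \ref{lem:estimatehsws}, but must absorb the extra $k$-dependent overhead produced by the $\nu$-walks in Lemma \ref{lem:qthswdecomp}. I would start by writing, for $n \geq 1$,
\begin{equation*}
h_{n,v} = \sum_{k \geq 1} \sum_{a_1 > \dots > a_k > 0} h_{n,v}(a_1, \dots, a_k),
\end{equation*}
and applying Lemma \ref{lem:qthswdecomp} term-by-term. A key preliminary observation is that only $k = O(\sqrt{n})$ indices contribute: since $a_1 > \dots > a_k > 0$ forces $a_j \geq k + 1 - j$, and each piece $\pi_j$ of the underlying HSW has length at least $a_j/d$, the total length satisfies $n \geq k(k+1)/(2d)$, so $k \leq \sqrt{2dn}$ whenever the summand is nonzero.

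For fixed $k$, I would group the sequences $(a_1, \dots, a_k)$ by the partial sums $A = a_1 + a_3 + \dots$ and $B = a_2 + a_4 + \dots$; the number of such sequences with prescribed $(A, B)$ is at most $P_D(A + B)$, since any such sequence determines a partition of $A + B$ into distinct parts. After swapping the order of summation and reparameterizing $A' = A + t$, $B' = B + t'$, the (weak) monotonicity of $P_D$ gives $P_D(A' - t + B' - t') \leq P_D(d(n + 2kr))$ uniformly, the sums over $(t, t')$ cost only a polynomial factor $(A' + 1)(B' + 1)$, and the remaining sums $\sum_{A'} b_{m+s,v}(A')$ and $\sum_{B'} \xbar{b}_{n-m+s',v}(B')$ collapse to $b_{m+s,v}$ and $\xbar{b}_{n-m+s',v}$. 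Applying (\ref{eq:bridgesbeta}) and its reversed analogue yields $b_{m+s,v}\,\xbar{b}_{n-m+s',v} \leq \beta_{max}^{n + 2kr + 2r}$, so consolidating all factors gives
\begin{equation*}
\sum_{a_1 > \dots > a_k > 0} h_{n,v}(a_1, \dots, a_k) \leq (r+1)^{k-1} Q(n,k)\, P_D(d(n + 2kr))\, \beta_{max}^{n + 2kr + 2r}
\end{equation*}
for some polynomial $Q$ in $n$ and $k$.

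Finally, I would sum over $k$ from $1$ to $\lfloor \sqrt{2dn} \rfloor$. The dangerous $k$-dependent factor $(r + 1)^{k-1} \beta_{max}^{2kr}$ is bounded by $C^k$ for $C := (r+1)\beta_{max}^{2r}$, and since $k$ ranges over $O(\sqrt{n})$ values the geometric sum contributes $e^{O(\sqrt{n})}$. The Hardy--Ramanujan estimate (\ref{eq:ramanujan}) gives $P_D(d(n + 2kr)) \leq e^{O(\sqrt{n})}$, and $Q(n,k)$ is polynomial in $n$, hence $e^{o(\sqrt{n})}$. Gathering these into a single factor $e^{B\sqrt{n}}$ for a sufficiently large $B > 0$ yields $h_{n,v} \leq e^{B \sqrt{n}} \beta_{max}^n$, as required.

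The main obstacle is the new $(r+1)^{k-1}$ prefactor and the exponent inflation $\beta_{max}^{2kr + 2r}$ absent from the transitive case. Both are tamed precisely by the a priori bound $k \leq \sqrt{2dn}$, which keeps their combined contribution at $e^{O(\sqrt{n})}$, i.e., subleading compared with $\beta_{max}^n$.
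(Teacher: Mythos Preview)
Your proposal is correct and follows essentially the same strategy as the paper: bound $k \leq \sqrt{2dn}$ via $a_1+\dots+a_k \geq k(k+1)/2$, apply Lemma \ref{lem:qthswdecomp}, collapse the bridge counts using (\ref{eq:bridgesbeta}), and absorb the $k$-dependent overhead $(r+1)^{k}\beta_{max}^{2kr}$ together with the partition count into a single $e^{O(\sqrt{n})}$ factor. The paper streamlines one step by observing directly that $\sum_{t\geq 0} b_{l,v}(a+t) \leq b_{l,v}$, which eliminates the $t,t'$ sums without your reparametrization $A'=A+t$, $B'=B+t'$ and the attendant polynomial factor; otherwise the two arguments coincide.
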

\begin{proof}
We begin with the observation that for all integers $a,l \geq 0$ 
\begin{equation}\label{eq:qtbridgeswitht}
\sum_{t\geq 0} b_{l,v}(a+t)\leq b_{l,v}.
\end{equation}
Starting with (\ref{eq:hswinspans}) and using Lemma \ref{lem:qthswdecomp} and (\ref{eq:qtbridgeswitht}) yields 
\begin{equation*}
h_{n,v}\leq \sum_{k>0} \sum_{\substack{a_1>\dots>a_k>0\\a_1+\dots+a_k\leq dn}} (r+1)^k \sum_{m=0}^n \left(\sum_{s=0}^{kr} b_{m+s,v}\right) \left(\sum_{s'=0}^{kr} \xbar{b}_{n-m+s',v}\right).
\end{equation*}
Any partition of an integer $A$ into $k$ distinct integers satisfies $k(k+1)\leq 2A$ and therefore $k < \sqrt{2A}$. This fact together with statement (\ref{eq:bridgesbeta}) and the easy observation $\beta_{max} \leq \Delta$ implies
\begin{equation*}
\begin{split}
h_{n,v}&\leq \sum_{k>0} \sum_{\substack{a_1>\dots>a_k>0\\a_1+\dots+a_k\leq dn}} (r+1)^k (n+1) (kr+1)^2 \beta_{max}^{n+2kr+2r}\\
&\leq d n P_D(dn) (r+1)^{\sqrt{2dn}}(n+1) (\sqrt{2dn}r+1)^2 \Delta^{2 \sqrt{2dn}r+2r} \beta_{max}^n.
\end{split}
\end{equation*}
Using (\ref{eq:ramanujan}), for $B$ large enough, (\ref{eq:qthswestimate}) follows.
\end{proof}
Finally we obtain the analogue to Lemma \ref{lem:cnestimate}, which which can be proved in the same way.
\begin{lemma} \label{lem:qtcnestimate}
There is a constant $C>0$ such that for any $n \geq 0$ and $v \in X$
\begin{equation}\label{eq:qtcnestimate}
c_{n,v} \leq e^{C \sqrt{n}} \beta_{max}^{n}.
\end{equation}
\end{lemma}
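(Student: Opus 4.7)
The plan is to follow the proof of Lemma \ref{lem:cnestimate} verbatim, substituting the quasi-transitive uniform HSW estimate \eqref{eq:qthswestimate} in place of \eqref{eq:estimatehsws}.

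First I would take any SAW $\pi = (v_0 = v, v_1, \ldots, v_n) \in C_{n,v}$, and let $l$ be the maximal index in $[0, n]$ with $h(v_l) = h_{min}(\pi)$. Property (iii) of graph height functions yields a neighbor $v'$ of $v_l$ with $h(v') < h(v_l)$, selected via a fixed choice function on $X$. Then $\pi_a = (v_l, v_{l+1}, \ldots, v_n) \in H_{n-l, v_l}$ and $\pi_b = (v', v_l, v_{l-1}, \ldots, v_0) \in H_{l+1, v'}$ are HSWs, and $\pi$ is recoverable from the triple $(l, \pi_a, \pi_b)$. Arguing exactly as in the transitive case (the only adjustment being that the vertices $v_l, v'$ now vary with $\pi$ rather than being absorbed by the transitive action), one obtains
\begin{equation*}
c_{n,v} \leq K \sum_{l=0}^{n} h_{n-l, v_l(\pi)} \cdot h_{l+1, v'(\pi)}
\end{equation*}
for a constant $K$ (bounded by $\Delta+1$ thanks to local finiteness, absorbing the overcounting from the choice of $v'$).

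Next, I would apply the uniform bound \eqref{eq:qthswestimate}, which gives $h_{m, u} \leq e^{B\sqrt{m}} \beta_{max}^m$ for every $u \in X$. This yields
\begin{equation*}
c_{n,v} \leq K \sum_{l=0}^{n} e^{B(\sqrt{n-l} + \sqrt{l+1})} \beta_{max}^{n+1} \leq K(n+1) e^{B\sqrt{2(n+1)}} \beta_{max}^{n+1},
\end{equation*}
using the elementary inequality $\sqrt{a} + \sqrt{b} \leq \sqrt{2(a+b)}$ (attained at $l = (n-1)/2$). For any fixed $C > B\sqrt{2}$, the subexponential factor $K(n+1) \beta_{max}$ is dominated by $e^{(C - B\sqrt{2})\sqrt{n}}$ once $n$ is large, so $c_{n,v} \leq e^{C\sqrt{n}} \beta_{max}^n$ holds for all sufficiently large $n$; enlarging $C$ if necessary to cover the finitely many small values of $n$ then gives the claim for all $n \geq 0$ uniformly in $v$.

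The principal obstacle is justifying the counting inequality in the first step. In the transitive case this is implicit because HSW counts are vertex-independent and everything can be translated to a canonical basepoint; in the quasi-transitive case the HSW counts depend on the basepoint, but the uniform bound \eqref{eq:qthswestimate}—established precisely by the preceding lemma—renders this dependence harmless. Some care is needed to account for the overcounting caused by multiple possible lower-height neighbors $v'$, but by local finiteness this is bounded by a constant factor independent of $n$, which is absorbed into $K$ and does not affect the exponential growth rate.
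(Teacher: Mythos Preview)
Your proposal is correct and follows the paper's approach exactly: the paper itself says Lemma~\ref{lem:qtcnestimate} ``can be proved in the same way'' as Lemma~\ref{lem:cnestimate}, and you carry out precisely that—split the SAW at the last height-minimum, prepend a lower neighbour $v'$, obtain two half-space walks, and replace the transitive HSW estimate \eqref{eq:estimatehsws} by the uniform bound \eqref{eq:qthswestimate}. Two cosmetic remarks: the displayed inequality with $h_{n-l,v_l(\pi)}\cdot h_{l+1,v'(\pi)}$ inside a sum over $l$ alone is notationally awkward (the vertices depend on $\pi$, not on $l$); and the constant $K$ you introduce for ``overcounting from the choice of $v'$'' is unnecessary—once $v'$ is selected by a fixed rule, the map $\pi\mapsto(l,\pi_a,\pi_b)$ is already injective, so no extra factor is needed.
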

From this statement it follows directly that $\mu \leq \beta_{max}$, which finishes the proof of Theorem \ref{thm:extbridgethm}.




\section{Bridges in the Grandparent graph}\label{sec:grandparent}

In this section we provide an example of a graph which does not possess a unimodular graph height function. We calculate the bridge constants and use the bridge theorem to obtain the connective constant. \par
An end of a tree is an equivalence class of one-way infinite SAWs, where two walks are equivalent if they share all but finitely many initial vertices. Fix some end $\omega$ of the infinite 3-regular tree $T_3$ and let the graph "hang down" from the end $\omega$. Then the graph can be seen as the union of horizontal layers $H_k$, $k \in \mathbb{Z}$, every vertex $v\in H_k$ is adjacent to one vertex in $H_{k-1}$, called predecessor of $v$ and denoted by $p(v)$, and two vertices in $H_{k+1}$, called successors of $v$. We write $p^{k}(v)$ for the $k$-th predecessor of $v$, i.e. the vertex obtained by $k$ times application of $p$ to $v$. Similarly we denote by $S^{k}(v)$ the set of all vertices $u$ such that $p^{k}(u)=v$. We add the additional pairs $(v,p^{2}(v))$ to the neighbourhood relation of the graph and end up with the graph in Figure \ref{fig:grandparent}, where the newly related vertices are connected by dashed edges. This graph is often called the Grandparent graph and we will denote it by $GP$.\par
\begin{figure}[ht]
	\centering
	\begin{tikzpicture}
	\begin{scope}[scale=1.2]
	\draw[dotted] (-2.5,2) -- (2.5,2);	
	\draw[dotted] (-2.5,0.5) -- (2.5,0.5);
	\draw[dotted] (-2.5,-1) -- (2.5,-1);
	\draw[dotted] (-2.5,-2) -- (2.5,-2);
	\draw[dotted] (-2.5,-2.5) -- (2.5,-2.5);
	\coordinate (0) at (0,0.5);
	\fill (0) circle (1pt);
	\coordinate (00) at (1,-1);
	\fill (00) circle (1pt);
	\coordinate (01) at (-1,-1);
	\fill (01) circle (1pt);
	\coordinate (000) at (1.5,-2);
	\fill (000) circle (1pt);
	\coordinate (001) at (0.5,-2);
	\fill (001) circle (1pt);
	\coordinate (010) at (-0.5,-2);
	\fill (010) circle (1pt);
	\coordinate (011) at (-1.5,-2);
	\fill (011) circle (1pt);
	\coordinate (0000) at (7/4,-2.5);
	\fill (0000) circle (1pt);
	\coordinate (0001) at (5/4,-2.5);
	\fill (0001) circle (1pt);
	\coordinate (0010) at (3/4,-2.5);
	\fill (0010) circle (1pt);
	\coordinate (0011) at (1/4,-2.5);
	\fill (0011) circle (1pt);
	\coordinate (0100) at (-1/4,-2.5);
	\fill (0100) circle (1pt);
	\coordinate (0101) at (-3/4,-2.5);
	\fill (0101) circle (1pt);
	\coordinate (0110) at (-5/4,-2.5);
	\fill (0110) circle (1pt);
	\coordinate (0111) at (-7/4,-2.5);
	\fill (0111) circle (1pt);
	\coordinate (C) at (1.3,2);
	\fill (C) circle (1pt);
	\draw[line width=0.7pt] (C) -- (0);
	\foreach \i in {0,1}{
	\draw[line width=0.7pt] (0) -- (0\i);
	\foreach \j in {0,1}{
	\draw[line width=0.7pt] (0\i) -- (0\i\j);
	\foreach \k in {0,1}{
	\draw[line width=0.7pt] (0\i\j) -- (0\i\j\k);
	}}
	\draw[dashed,line width=0.7pt] (0\i) to (0\i01);
	\draw[dashed,line width=0.7pt] (0\i) to[out=-30,in=70] (0\i00);
	\draw[dashed,line width=0.7pt] (0\i) to (0\i10);
	\draw[dashed,line width=0.7pt] (0\i) to[out=210,in=110] (0\i11);}
	\draw[dashed,line width=0.7pt] (0) to[out=-10,in=80] (000);
	\draw[dashed,line width=0.7pt] (0) to (001);
	\draw[dashed,line width=0.7pt] (0) to (010);
	\draw[dashed,line width=0.7pt] (0) to[out=190,in=100] (011);
	\draw[dashed,line width=0.7pt] (C) to (00);
	\draw[dashed,line width=0.7pt] (C) to[out=190,in=90] (01);
	\draw[->,line width=0.7pt] (C) to node[above] {$\omega$} (2,2.5);
	\draw[line width =0.7] (C) -- ++(-50:1);
	\node[anchor=east] at (-2.6,2) {$H_{k-2}$};
	\node[anchor=east] at (-2.6,0.5) {$H_{k-1}$};
	\node[anchor=east] at (-2.6,-1) {$H_k$};
	\node[anchor=east] at (-2.6,-2) {$H_{k+1}$};
	\node[anchor=east] at (-2.6,-2.5) {$H_{k+2}$};
	\node at (-1.15,-0.85) {$v$};
	\node at (-0.15,0.73) {$p(v)$};
	\node at (-2.1,-1.8) {$S(v)$};
	\draw[gray] (-1.7,-1.8) rectangle (-0.3,-2.2);
	\end{scope}
	\end{tikzpicture}
	\caption{Grandparent graph $GP$.}
	\label{fig:grandparent}
\end{figure}
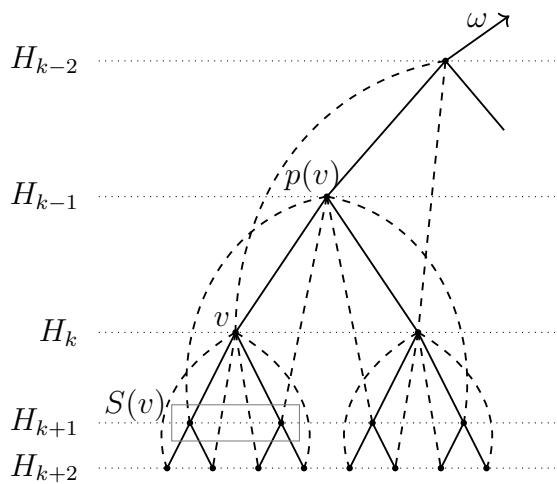
$GP$ is transitive and it is not a Cayley graph of any group, a proof for this was given by {\sc Soardi and Woess} in \cite{MR1082868}. Moreover it is not hard to see that all graph automorphisms on $GP$ fix the end $\omega$. For any vertex $v$ of $X$
\begin{equation*}
\abs{\AUT(GP)_v p(v)}= 1 \neq 2 = \abs{\AUT(GP)_{p(v)} v},
\end{equation*} 
thus the action of $\AUT(GP)$ is not unimodular. This implies that there cannot exist a unimodular graph height function on $GP$.\par
$GP$ admits the obvious graph height function $(h,\AUT(GP))$, where the map $h$ associates to every vertex $v$ the index $k$ of the layer $H_k$ containing $v$.\par
We fix a vertex $o$ and use ordinary generating functions to count bridges starting at $o$ and calculate the bridge constants $\beta(GP,h)$ and $\xbar{\beta}(GP,h)$.
From the structure of the graph it is intuitively clear that there are more bridges then reversed bridges starting at $o$, so we start by counting bridges. 
Let for $a \geq 0$ $\mathcal{B}_{a,v}$ be the set of all bridges of span $a$ starting at a vertex $v$ and $\mathcal{B}_a(x)$ be the ordinary generating functions corresponding to this class, which is independent of $v$ and given by
\begin{equation*}
\mathcal{B}_a(x)= \sum_{n\geq 0} b_n(a) x^n.
\end{equation*}

Then $\mathcal{B}_a(x)$ is a polynomial for every $a \geq 0$ and it is not hard to obtain
\begin{equation*} 
\mathcal{B}_0(x)=1, \quad \mathcal{B}_1(x)=2x, \quad \mathcal{B}_2(x)=4x+4x^2+4x^3.
\end{equation*}
By (\ref{eq:bridgessum}), the generating function $\mathcal{B}(x)$ counting all bridges is
\begin{equation*}
\mathcal{B}(x):= \sum_{n \geq 0} b_n x^n= \sum_{a\geq 0} \mathcal{B}_a(x).
\end{equation*}
By Cauchy-Hadamard's formula the bridge constant $\beta$ is the reciprocal of the radius of convergence of the generating function $\mathcal{B}(x)$.\par
For $a \geq 3$ we recursively count all bridges $\pi=(v_0,v_1,\dots, v_m) \in \mathcal{B}_{a,o}$. The different classes of bridges discussed are shown in Figure \ref{fig:typesofwalks}. The bridge $\pi$ starts at $v_0=o$, so either $v_1 \in S(o)$ or $v_1 \in S^2(o)$. If $v_1 \in S(o)$, then the rest $(v_1,\dots,v_m)$ of $\pi$ must be in $\mathcal{B}_{a-1,v_1}$ (class 1).\par 
Let now $v_1 \in S^2(o)$. We distinguish the following sub-cases:
If $\pi$ does not contain $p(v_1)$, the walk $(v_1,\dots,v_m)$ must be in $\mathcal{B}_{a-2,v_1}$ (class 2).\par
Otherwise, there is some index $l \in [2,n]$ with $v_l=p(v_1)$ and we can decompose the walk at $v_l$ to obtain walks $\pi_1=(v_0, \dots, v_l)$ and $\pi_2=(v_l, \dots v_m)$. This means that $\pi_1$ can have one of two possible shapes, depending on the parity of $l$. Let $l=2k$ in the case where $l$ is even and $l=2k+1$ otherwise. In both cases we have $v_i \in S^2(v_{i-1})$ for $1 \leq  i \leq k-1$. The $k$-th step satisfies $v_k=p(v_{k-1})$ if $l$ is odd and $v_k \in S(v_{k-1})$ for even $l$. The walk concludes with the steps $v_i = p^2(v_{i-1})$ for $k+1 \leq i \leq l$. We call walks with this shape U-walks.\par
Since the span of $\pi$ is $a$, the span of the U-walk $\pi_1$ can be at most $a$. Note that for any $v$ there are 2 vertices in $S(v)$ and 4 vertices in $S^2(v)$. The generating function of U-walks of span at most $a$ is thus given by
\begin{equation*}
\mathcal{U}_a(x)=4x^2+8x^3+\dots+(2x)^a.
\end{equation*}\par
There are three possible ways how the second part $\pi_2$ of $\pi$ may look:\par
\begin{enumerate}
    \item[(i)] $\pi_2 \in \mathcal{B}_{a-1,p(v_1)}$ and does not contain $v_1$ or a vertex in $S(v_1)$. Precisely half of the bridges in $\mathcal{B}_{a-1,p(v_1)}$ satisfy this condition. (class 3)
    \item[(ii)] If $l\geq 3$ then $v_{l-1} \in S(v_1)$, hence $v_{l+1}$ is the other vertex in $S(v_1)$ and $(v_{l+2}, \dots, v_m) \in \mathcal{B}_{a-3,p(v_{l+2})}$. (class 4)
    \item[(iii)] If $l=2$, both of the vertices in $S(v_1)$  are available for $v_{l+1}$ and again $(v_{l+2}, \dots, v_m) \in \mathcal{B}_{a-3,p(v_{l+2})}$. (class 5)
\end{enumerate}
\par
\par

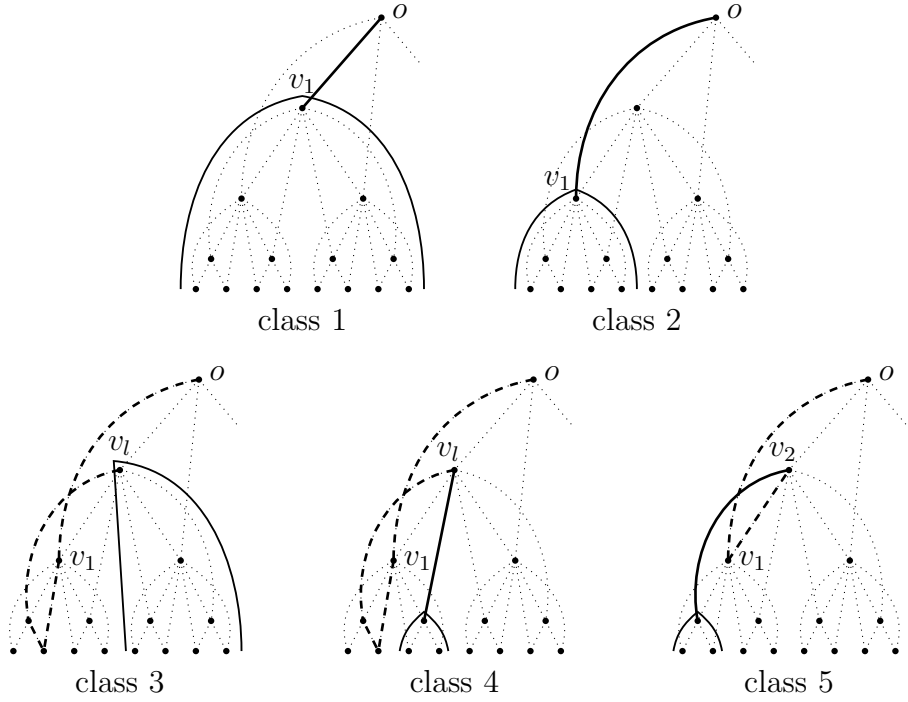
\begin{figure}[ht]
	\centering
	\begin{tikzpicture}
	\begin{scope}[scale=0.8]
	\coordinate (0) at (0,0.5);
	\fill (0) circle (1.5pt);
	\coordinate (00) at (1,-1);
	\fill (00) circle (1.5pt);
	\coordinate (01) at (-1,-1);
	\fill (01) circle (1.5pt);
	\coordinate (000) at (1.5,-2);
	\fill (000) circle (1.5pt);
	\coordinate (001) at (0.5,-2);
	\fill (001) circle (1.5pt);
	\coordinate (010) at (-0.5,-2);
	\fill (010) circle (1.5pt);
	\coordinate (011) at (-1.5,-2);
	\fill (011) circle (1.5pt);
	\coordinate (0000) at (7/4,-2.5);
	\fill (0000) circle (1.5pt);
	\coordinate (0001) at (5/4,-2.5);
	\fill (0001) circle (1.5pt);
	\coordinate (0010) at (3/4,-2.5);
	\fill (0010) circle (1.5pt);
	\coordinate (0011) at (1/4,-2.5);
	\fill (0011) circle (1.5pt);
	\coordinate (0100) at (-1/4,-2.5);
	\fill (0100) circle (1.5pt);
	\coordinate (0101) at (-3/4,-2.5);
	\fill (0101) circle (1.5pt);
	\coordinate (0110) at (-5/4,-2.5);
	\fill (0110) circle (1.5pt);
	\coordinate (0111) at (-7/4,-2.5);
	\fill (0111) circle (1.5pt);
	\coordinate (C) at (1.3,2);
	\fill (C) circle (1.5pt);
	\draw[dotted] (C) -- (0);
	\foreach \i in {0,1}{
	\draw[dotted] (0) -- (0\i);
	\foreach \j in {0,1}{
	\draw[dotted] (0\i) -- (0\i\j);
	\foreach \k in {0,1}{
	\draw[dotted] (0\i\j) -- (0\i\j\k);
	}}
	\draw[dotted] (0\i) to (0\i01);
	\draw[dotted] (0\i) to[out=-30,in=70] (0\i00);
	\draw[dotted] (0\i) to (0\i10);
	\draw[dotted] (0\i) to[out=210,in=110] (0\i11);}
	\draw[dotted] (0) to[out=-10,in=80] (000);
	\draw[dotted] (0) to (001);
	\draw[dotted] (0) to (010);
	\draw[dotted] (0) to[out=190,in=100] (011);
	\draw[dotted] (C) to (00);
	\draw[dotted] (C) to[out=190,in=90] (01);
	\draw[dotted] (C) -- ++(-50:1);
	\draw[line width=1pt] (C) -- (0);
	\draw [line width=0.7pt] (-2,-2.5) to[out=90,in=190] (0,0.7) to[out=-10,in=90] (2,-2.5);
	\node at (1.6,2.1) {$o$};
	\node at (0,0.9) {$v_1$};
	\node at (0,-3) {class 1};
	\end{scope}
	
	\begin{scope}[scale=0.8, xshift=5.5cm]
	\coordinate (0) at (0,0.5);
	\fill (0) circle (1.5pt);
	\coordinate (00) at (1,-1);
	\fill (00) circle (1.5pt);
	\coordinate (01) at (-1,-1);
	\fill (01) circle (1.5pt);
	\coordinate (000) at (1.5,-2);
	\fill (000) circle (1.5pt);
	\coordinate (001) at (0.5,-2);
	\fill (001) circle (1.5pt);
	\coordinate (010) at (-0.5,-2);
	\fill (010) circle (1.5pt);
	\coordinate (011) at (-1.5,-2);
	\fill (011) circle (1.5pt);
	\coordinate (0000) at (7/4,-2.5);
	\fill (0000) circle (1.5pt);
	\coordinate (0001) at (5/4,-2.5);
	\fill (0001) circle (1.5pt);
	\coordinate (0010) at (3/4,-2.5);
	\fill (0010) circle (1.5pt);
	\coordinate (0011) at (1/4,-2.5);
	\fill (0011) circle (1.5pt);
	\coordinate (0100) at (-1/4,-2.5);
	\fill (0100) circle (1.5pt);
	\coordinate (0101) at (-3/4,-2.5);
	\fill (0101) circle (1.5pt);
	\coordinate (0110) at (-5/4,-2.5);
	\fill (0110) circle (1.5pt);
	\coordinate (0111) at (-7/4,-2.5);
	\fill (0111) circle (1.5pt);
	\coordinate (C) at (1.3,2);
	\fill (C) circle (1.5pt);
	\draw[dotted] (C) -- (0);
	\foreach \i in {0,1}{
	\draw[dotted] (0) -- (0\i);
	\foreach \j in {0,1}{
	\draw[dotted] (0\i) -- (0\i\j);
	\foreach \k in {0,1}{
	\draw[dotted] (0\i\j) -- (0\i\j\k);
	}}
	\draw[dotted] (0\i) to (0\i01);
	\draw[dotted] (0\i) to[out=-30,in=70] (0\i00);
	\draw[dotted] (0\i) to (0\i10);
	\draw[dotted] (0\i) to[out=210,in=110] (0\i11);}
	\draw[dotted] (0) to[out=-10,in=80] (000);
	\draw[dotted] (0) to (001);
	\draw[dotted] (0) to (010);
	\draw[dotted] (0) to[out=190,in=100] (011);
	\draw[dotted] (C) to (00);
	\draw[dotted] (C) to[out=190,in=90] (01);
	\draw[dotted] (C) -- ++(-50:1);
	\draw[line width=1pt] (C) to[out=190,in=90] (01);
	\draw [line width=0.7pt] (-2,-2.5) to[out=90,in=200] (-1,-0.85) to[out=-20,in=90] (0,-2.5);
	\node at (1.6,2.1) {$o$};
	\node at (-1.3,-0.7) {$v_1$};
	\node at (0,-3) {class 2};
	\end{scope}
	
	\begin{scope}[scale=0.8, xshift=-3cm,yshift=-6cm]
	\coordinate (0) at (0,0.5);
	\fill (0) circle (1.5pt);
	\coordinate (00) at (1,-1);
	\fill (00) circle (1.5pt);
	\coordinate (01) at (-1,-1);
	\fill (01) circle (1.5pt);
	\coordinate (000) at (1.5,-2);
	\fill (000) circle (1.5pt);
	\coordinate (001) at (0.5,-2);
	\fill (001) circle (1.5pt);
	\coordinate (010) at (-0.5,-2);
	\fill (010) circle (1.5pt);
	\coordinate (011) at (-1.5,-2);
	\fill (011) circle (1.5pt);
	\coordinate (0000) at (7/4,-2.5);
	\fill (0000) circle (1.5pt);
	\coordinate (0001) at (5/4,-2.5);
	\fill (0001) circle (1.5pt);
	\coordinate (0010) at (3/4,-2.5);
	\fill (0010) circle (1.5pt);
	\coordinate (0011) at (1/4,-2.5);
	\fill (0011) circle (1.5pt);
	\coordinate (0100) at (-1/4,-2.5);
	\fill (0100) circle (1.5pt);
	\coordinate (0101) at (-3/4,-2.5);
	\fill (0101) circle (1.5pt);
	\coordinate (0110) at (-5/4,-2.5);
	\fill (0110) circle (1.5pt);
	\coordinate (0111) at (-7/4,-2.5);
	\fill (0111) circle (1.5pt);
	\coordinate (C) at (1.3,2);
	\fill (C) circle (1.5pt);
	\draw[dotted] (C) -- (0);
	\foreach \i in {0,1}{
	\draw[dotted] (0) -- (0\i);
	\foreach \j in {0,1}{
	\draw[dotted] (0\i) -- (0\i\j);
	\foreach \k in {0,1}{
	\draw[dotted] (0\i\j) -- (0\i\j\k);
	}}
	\draw[dotted] (0\i) to (0\i01);
	\draw[dotted] (0\i) to[out=-30,in=70] (0\i00);
	\draw[dotted] (0\i) to (0\i10);
	\draw[dotted] (0\i) to[out=210,in=110] (0\i11);}
	\draw[dotted] (0) to[out=-10,in=80] (000);
	\draw[dotted] (0) to (001);
	\draw[dotted] (0) to (010);
	\draw[dotted] (0) to[out=190,in=100] (011);
	\draw[dotted] (C) to (00);
	\draw[dotted] (C) to[out=190,in=90] (01);
	\draw[dotted] (C) -- ++(-50:1);
	\draw[line width=1pt, dashed] (C) to[out=190,in=90] (01);
	\draw[line width=1pt, dashed]  (01) -- (0110) -- (011);
	\draw[line width=1pt, dashed] (0) to[out=190,in=100] (011);
	\draw [line width=0.7pt] (0.1,-2.5) -- (-0.1,0.65) to[out=-5,in=90] (2,-2.5);
	\node at (1.6,2.1) {$o$};
	\node at (-0.6,-1) {$v_1$};
	\node at (0,0.9) {$v_l$};
	\node at (0,-3) {class 3};
	\end{scope}
	
		\begin{scope}[scale=0.8, xshift=2.5cm,yshift=-6cm]
	\coordinate (0) at (0,0.5);
	\fill (0) circle (1.5pt);
	\coordinate (00) at (1,-1);
	\fill (00) circle (1.5pt);
	\coordinate (01) at (-1,-1);
	\fill (01) circle (1.5pt);
	\coordinate (000) at (1.5,-2);
	\fill (000) circle (1.5pt);
	\coordinate (001) at (0.5,-2);
	\fill (001) circle (1.5pt);
	\coordinate (010) at (-0.5,-2);
	\fill (010) circle (1.5pt);
	\coordinate (011) at (-1.5,-2);
	\fill (011) circle (1.5pt);
	\coordinate (0000) at (7/4,-2.5);
	\fill (0000) circle (1.5pt);
	\coordinate (0001) at (5/4,-2.5);
	\fill (0001) circle (1.5pt);
	\coordinate (0010) at (3/4,-2.5);
	\fill (0010) circle (1.5pt);
	\coordinate (0011) at (1/4,-2.5);
	\fill (0011) circle (1.5pt);
	\coordinate (0100) at (-1/4,-2.5);
	\fill (0100) circle (1.5pt);
	\coordinate (0101) at (-3/4,-2.5);
	\fill (0101) circle (1.5pt);
	\coordinate (0110) at (-5/4,-2.5);
	\fill (0110) circle (1.5pt);
	\coordinate (0111) at (-7/4,-2.5);
	\fill (0111) circle (1.5pt);
	\coordinate (C) at (1.3,2);
	\fill (C) circle (1.5pt);
	\draw[dotted] (C) -- (0);
	\foreach \i in {0,1}{
	\draw[dotted] (0) -- (0\i);
	\foreach \j in {0,1}{
	\draw[dotted] (0\i) -- (0\i\j);
	\foreach \k in {0,1}{
	\draw[dotted] (0\i\j) -- (0\i\j\k);
	}}
	\draw[dotted] (0\i) to (0\i01);
	\draw[dotted] (0\i) to[out=-30,in=70] (0\i00);
	\draw[dotted] (0\i) to (0\i10);
	\draw[dotted] (0\i) to[out=210,in=110] (0\i11);}
	\draw[dotted] (0) to[out=-10,in=80] (000);
	\draw[dotted] (0) to (001);
	\draw[dotted] (0) to (010);
	\draw[dotted] (0) to[out=190,in=100] (011);
	\draw[dotted] (C) to (00);
	\draw[dotted] (C) to[out=190,in=90] (01);
	\draw[dotted] (C) -- ++(-50:1);
	\draw[line width=1pt, dashed] (C) to[out=190,in=90] (01);
	\draw[line width=1pt, dashed]  (01) -- (0110) -- (011);
	\draw[line width=1pt, dashed] (0) to[out=190,in=100] (011);
	\draw[line width=1pt] (0) -- (010);
	\draw [line width=0.7pt] (-0.9,-2.5) to[out=80,in=220] (-0.5,-1.85) to[out=-40,in=100] (-0.1,-2.5);
	\node at (1.6,2.1) {$o$};
	\node at (-0.6,-1) {$v_1$};
	\node at (-0.1,0.8) {$v_l$};
	\node at (0,-3) {class 4};
	\end{scope}
	
		\begin{scope}[scale=0.8, xshift=8cm,yshift=-6cm]
	\coordinate (0) at (0,0.5);
	\fill (0) circle (1.5pt);
	\coordinate (00) at (1,-1);
	\fill (00) circle (1.5pt);
	\coordinate (01) at (-1,-1);
	\fill (01) circle (1.5pt);
	\coordinate (000) at (1.5,-2);
	\fill (000) circle (1.5pt);
	\coordinate (001) at (0.5,-2);
	\fill (001) circle (1.5pt);
	\coordinate (010) at (-0.5,-2);
	\fill (010) circle (1.5pt);
	\coordinate (011) at (-1.5,-2);
	\fill (011) circle (1.5pt);
	\coordinate (0000) at (7/4,-2.5);
	\fill (0000) circle (1.5pt);
	\coordinate (0001) at (5/4,-2.5);
	\fill (0001) circle (1.5pt);
	\coordinate (0010) at (3/4,-2.5);
	\fill (0010) circle (1.5pt);
	\coordinate (0011) at (1/4,-2.5);
	\fill (0011) circle (1.5pt);
	\coordinate (0100) at (-1/4,-2.5);
	\fill (0100) circle (1.5pt);
	\coordinate (0101) at (-3/4,-2.5);
	\fill (0101) circle (1.5pt);
	\coordinate (0110) at (-5/4,-2.5);
	\fill (0110) circle (1.5pt);
	\coordinate (0111) at (-7/4,-2.5);
	\fill (0111) circle (1.5pt);
	\coordinate (C) at (1.3,2);
	\fill (C) circle (1.5pt);
	\draw[dotted] (C) -- (0);
	\foreach \i in {0,1}{
	\draw[dotted] (0) -- (0\i);
	\foreach \j in {0,1}{
	\draw[dotted] (0\i) -- (0\i\j);
	\foreach \k in {0,1}{
	\draw[dotted] (0\i\j) -- (0\i\j\k);
	}}
	\draw[dotted] (0\i) to (0\i01);
	\draw[dotted] (0\i) to[out=-30,in=70] (0\i00);
	\draw[dotted] (0\i) to (0\i10);
	\draw[dotted] (0\i) to[out=210,in=110] (0\i11);}
	\draw[dotted] (0) to[out=-10,in=80] (000);
	\draw[dotted] (0) to (001);
	\draw[dotted] (0) to (010);
	\draw[dotted] (0) to[out=190,in=100] (011);
	\draw[dotted] (C) to (00);
	\draw[dotted] (C) to[out=190,in=90] (01);
	\draw[dotted] (C) -- ++(-50:1);
	\draw[line width=1pt,dashed] (C) to[out=190,in=90] (01);
	\draw[line width=1pt] (0) to[out=190,in=100] (011);
	\draw[line width=1pt,dashed] (01) -- (0);
	\draw [line width=0.7pt] (-1.9,-2.5) to[out=80,in=220] (-1.5,-1.85) to[out=-40,in=100] (-1.1,-2.5);
	\node at (1.6,2.1) {$o$};
	\node at (-0.6,-1) {$v_1$};
	\node at (-0.1,0.8) {$v_2$};
	\node at (0,-3) {class 5};
	\end{scope}
	\end{tikzpicture}
	\caption{Five classes of walks in $B_a^d$. U-walks are drawn dashed.}
	\label{fig:typesofwalks}
\end{figure}

Translating these combinatorial observations into generating functions yields the recursive formula
\begin{equation*}
\mathcal{B}_a(x)=f_a(x)\mathcal{B}_{a-1}(x)+ g(x) \mathcal{B}_{a-2}(x)+ h_a(x) \mathcal{B}_{a-3}(x),
\end{equation*}
where
\begin{equation*}
\begin{split}
f_a(x)&=2x+2x^2\frac{1-(2x)^{a-1}}{1-2x},\\ 
g(x)&= 4x,\\ 
h_a(x)&=8x^3+8x^4\frac{1-(2x)^{a-2}}{1-2x}.
\end{split}
\end{equation*}\par
Fix some $x_0$ in $\mathbb{R}_{\geq 0}$. Then $(\mathcal{B}^d_a(x_0))_{a \geq 0}$ is a sequence in $\mathbb{R}_{\geq 0}$. It is not hard to see that
\begin{equation*}
\mathcal{B}(x_0)= \sum_{a\geq 0} \mathcal{B}_a(x_0) 
\begin{cases}
<\infty \quad \text{if } \lim \limits_{a \rightarrow \infty} (f_a(x_0)+g(x_0)+h_a(x_0)) < 1, \\
=\infty \quad \text{if } \lim \limits_{a \rightarrow \infty} (f_a(x_0)+g(x_0)+h_a(x_0)) > 1.
\end{cases}
\end{equation*}
From this observation it follows that the radius of convergence of $\mathcal{B}(x)$ is the threshold value for $x_0$, which can be found as the smallest positive root of the polynomial
\begin{equation*}
1-8x+10x^2-8x^3+8x^4.
\end{equation*}
So in particular the bridge constant is an algebraic number, which is approximately
\begin{equation*}
\beta(GP,h) \approx 6.64993.
\end{equation*}\par

A similar construction can be used to calculate the reversed bridge constant. For this we define the generating functions $\xbar{\mathcal{B}}_a(x)$ and $\xbar{\mathcal{B}}(x)$ counting reversed bridges as above. The main difference is that we obtain the following recursive formula by looking at the different cases for the final parts of bridges of span $a$:
\begin{equation*}
\xbar{\mathcal{B}}_a(x)=\xbar{f}_a(x)\xbar{\mathcal{B}}_{a-1}(x)+ \xbar{g}(x) \xbar{\mathcal{B}}_{a-2}(x)+ \xbar{h}_a(x) \xbar{\mathcal{B}}_{a-3}(x),
\end{equation*}
where
\begin{equation*}
\begin{split}
\xbar{f}_a(x)&=x+x^2\frac{1-(2x)^{a-2}}{1-2x},\\ 
\xbar{g}(x)&= x,\\ 
\xbar{h}_a(x)&=x^3+x^4\frac{1-(2x)^{a-3}}{1-2x}.
\end{split}
\end{equation*}\par
The radius of convergence of $\bar{\mathcal{B}}(x)$ is the smallest positive root of the polynomial
\begin{equation*}
1-4x+3x^2-x^3+x^4
\end{equation*}
and its reciprocal is the reversed bridge constant
\begin{equation*}
\xbar{\beta}(GP,h) \approx 3.10380.
\end{equation*}\par
As an application of the bridge theorem (Theorem \ref{thm:extbridgethm}) we obtain the connective constant of $GP$,
\begin{equation*}
\mu(GP)=\max\{\beta(GP,h),\xbar{\beta}(GP,h)\} \approx 6.64993.
\end{equation*}

\bibliographystyle{plain}
\bibliography{latex}

\end{document}